\documentclass[a4paper,12pt]{article}

\usepackage[utf8]{inputenc}
\usepackage[english]{babel}
\usepackage{enumitem}
\usepackage{float}
\usepackage{amsmath}
\usepackage{amssymb}
\usepackage{amsthm}
\usepackage{amsfonts}
\usepackage{booktabs}
\usepackage{epsfig}
\usepackage{epstopdf}
\usepackage{cite}
\usepackage{algorithm,algpseudocode}

\usepackage{caption}
\captionsetup{justification=centering}

\setlength{\topmargin}{-1.0cm}
\setlength{\textheight}{24cm}
\setlength{\textwidth}{15cm}
\setlength{\oddsidemargin}{8mm}
\setlength{\evensidemargin}{8mm}

%\usepackage[backend=bibtex8]{biblatex}
%\ExecuteBibliographyOptions{url=false,doi=false,isbn=false,eprint=false}
%\ExecuteBibliographyOptions{clearlang=true,firstinits=true,maxbibnames=99}
%\AtEveryBibitem{\ifentrytype{book}{\clearfield{pages}}{}}
%\renewbibmacro{in:}{}
%\addbibresource{QVI_ALM.bib}

\newtheorem{dfn}{Definition}[section]
\newtheorem{lem}[dfn]{Lemma}
\newtheorem{thm}[dfn]{Theorem}

%\theorembodyfont{\normalfont}
\theoremstyle{definition}

\newtheorem{asm}[dfn]{Assumption}
\newtheorem{exm}{Example}[section]
\newtheorem{rem}[dfn]{Remark}

\allowdisplaybreaks

\title{Convergence Analysis of Projection Method for Variational Inequalities}

\date{January 22, 2021}

\author{Yekini
Shehu\footnote{Department of Mathematics, Zhejiang Normal University, Jinhua,
321004, People's Republic of China; Institute of Science and Technology (IST), Am Campus 1, 3400, Klosterneuburg, Vienna, Austria; e-mail: yekini.shehu@unn.edu.ng.}
\hspace*{0.8mm}   Olaniyi. S. Iyiola\footnote{Department of Mathematics, Minnesota State University-Moorhead, Minnesota, USA; e-mail: olaniyi.iyiola@mnstate.edu}
\hspace*{0.8mm}
Xiao-Huan Li\footnote{College of Science, Civil Aviation University of China, Tianjin 300300, China.; e-mail: xiaohuanlimath@163.com.}\,\, and Qiao-Li Dong\footnote{College of Science, Civil Aviation University of China, Tianjin 300300, China.; e-mail: dongql@lsec.cc.ac.cn.}}

\begin{document}

\maketitle

\begin{abstract}
\noindent
The main contributions of this paper are the proposition and the convergence analysis of a class of
inertial projection-type algorithm for solving variational inequality problems in real Hilbert spaces where the underline operator is monotone and uniformly continuous. We carry out a unified analysis of the proposed method under very mild assumptions. In particular, weak convergence of the generated sequence is established and nonasymptotic $O(1/n)$ rate of convergence is established, where $n$ denotes the iteration counter. We also present some experimental results to illustrate the profits gained by introducing the inertial extrapolation steps.
\end{abstract}

\section{Introduction}\label{Sec:Intro}
\noindent
We first state the formal definition of some classes of functions that
play an essential role in this paper.\\

\noindent
Let $ H $ be a real Hilbert space and $ X \subseteq H $ be a nonempty subset.
\begin{dfn}\label{Def:LipMon}
A mapping $ F: X \to H $
is called
\begin{itemize}
\item[{\rm(a)}] {\em monotone} on $X$ if $\langle
F(x) - F(y), x-y\rangle\geq0$ for all $x,y\in X$;
\item[{\rm(b)}] {\em Lipschitz continuous} on $X$ if there exists a
      constant $L>0$ such that
      $$\|F(x) - F(y)\|\leq L\|x-y\|,\ \forall x,y\in X.$$
\item[{\rm(c)}] {\em sequentially weakly continuous} if for each sequence $\{x_n\}$ we have:  $\{x_n\}$ converges weakly to $x$ implies
$\{F(x_n)\}$ converges weakly to $F(x)$.

\end{itemize}
\end{dfn}

\noindent
Let $C$ be a nonempty, closed and convex subset of $H$ and $F:C\rightarrow H$ be a continuous mapping. The variational inequality problem (for short,
VI($F,C$)) is defined as: find $x\in C$ such that
\begin{eqnarray}\label{bami2}
   \langle F(x), y-x\rangle \geq 0, \quad \forall y \in C.
\end{eqnarray}
Let $ \text{SOL} $ denote the solution set of VI($F,C$) \eqref{bami2}. Variational inequality theory is an important tool in economics, engineering mechanics, mathematical programming, transportation, and so on (see, for example, \cite{Aubin, Baiocchi, Glowinski, Khobotov, Kinderlehrer, Konnov, Marcotte}). \\

\noindent
A well-known projection-type method for solving VI($F,C$) \eqref{bami2} is the extragradient method introduced by
Korpelevich in \cite{Korpelevich}. It is well known that the extragradient method requires two
projections onto the set $C$ and two evaluations of $F$ per iteration.\\

\noindent
One important hallmark in the design of numerical methods related to the
extragradient method is to minimize the number of evaluations of $P_C$ per
iteration because if $C$ is a general closed and convex set,
then a minimal distance problem has to be solved (twice) in order to obtain the next
iterate. This has the capacity to seriously affect the efficiency of the extragradient method in a situation,
where a projection onto $C$ is hard to evaluate and therefore
computationally costly.\\

\noindent
An attempt in this direction was initiated by
Censor et al.\cite{Censor2}, who modified extragradient method
by replacing the second projection onto the closed and convex
subset $ C $ with the one onto a subgradient half-space. Their method, which
therefore uses only one projection onto $ C $, is called
the subgradient extragradient method: $x_1\in H$,
\begin{eqnarray}\label{pppp3}
\left\{  \begin{array}{llll}
         & y_n = P_C(x_n-\lambda F(x_n)),\\
         & T_n:=\{w \in H:\langle x_n-\lambda F(x_n)-y_n,w-y_n\rangle \leq 0\},\\
         & x_{n+1}=P_{T_n}(x_n-\lambda F(y_n)).
         \end{array}
         \right.
\end{eqnarray}
Using \eqref{pppp3}, Censor {\it et al.} \cite{Censor2} proved weak convergence result for VI($F,C$) \eqref{bami2} with a monotone and $L$-Lipschitz-continuous mapping $ F $ where $\lambda \in (0,\frac{1}{L})$. Several other related methods to extragradient method and \eqref{pppp3} for solving VI($F,C$) \eqref{bami2} in real Hilbert spaces when $F$ is monotone and $L$-Lipschitz-continuous mapping have been studied in the literature (see, for example, \cite{Ceng, CengYao1, Censor1, Dong, Hieu, MaingePE1, Malitsky3, Malitsky, Nadezhkina, Tseng}).\\

\noindent
Motivated the result of Alvarez and Attouch in \cite{Alvarez} and Censor {\it et al.} in \cite{Censor2},
Thong and Hieu \cite{Thong} introduced an algorithm which is a combination of \eqref{pppp3} and inertial method for solving VI($F,C$) \eqref{bami2} in real Hilbert space: $x_0, x_1\in H$,
\begin{eqnarray}\label{mm2}
\left\{  \begin{array}{llll}
         & w_n=x_n+\alpha_n(x_n-x_{n-1}),\\
         & y_n = P_C(w_n-\lambda F(w_n)),\\
         & T_n:=\{w \in H:\langle w_n-\lambda F(w_n)-y_n,w-y_n\rangle \leq 0\},\\
         & x_{n+1}=P_{T_n}(w_n-\lambda F(y_n)).
         \end{array}
         \right.
\end{eqnarray}
Thong and Hieu \cite{Thong} proved that the sequence $\{x_n\}$ generated by \eqref{mm2} converges weakly
to a solution of VI($F,C$) \eqref{bami2} with a monotone and $L$-Lipschitz-continuous mapping $ F $ where $0<
\lambda L\leq \frac{\frac{1}{2}-2\alpha-\frac{1}{2}\alpha^2-\delta}{\frac{1}{2}-\alpha+\frac{1}{2}\alpha^2}$ for some
$0<\delta<\frac{1}{2}-2\alpha-\frac{1}{2}\alpha^2$ and $\{\alpha_n\}$ is a non-decreasing sequence with $0\leq \alpha_n \leq \alpha<\sqrt{5}-2$.\\

\noindent
One the main features of the above mentioned methods \eqref{pppp3}, \eqref{mm2} and other related methods is the computational issue, for example, step-sizes. The step-sizes in these above methods are bounded by the inverse of the Lipschitz constant which is quite inefficient, because in most cases a global Lipschitz constant (if it indeed exists) of $F$ cannot be accurately estimated, and is usually overestimated, thereby resulting in too small step-sizes. This, of course, is not practical.
Therefore, algorithms \eqref{pppp3} and \eqref{mm2} are not applicable in most cases of
interest. The usual approach to overcome this difficulty consists in some prediction
of a step-size with its further correction (see \cite{Khobotov, Marcotte}) or in a usage of an Armijo type
line search procedure along a feasible direction (see \cite{Solodov}). In terms of computations, the latter
approach is more effective, since very often the former approach requires too many
projections onto the feasible set per iteration.\\

\noindent
This paper focuses on the analysis and development of computational projection-type algorithm with inertial extrapolation step for solving VI($F,C$) \eqref{bami2} when the underline operator $F$ is monotone and uniformly continuous when the feasible set $C$ is a nonempty closed affine subset. We obtain weak convergence of the sequence generated by our method. We provide theoretical analysis of our result with weaker assumption on the underline operator $F$ unlike \cite{Censor1, Censor2, MaingePE1, Malitsky3} and many other related results on monotone variational inequalities. We also establish the nonasymptotic $O(1/n)$ rate of convergence, which is not given before in other previous inertial type projection methods for VI($F,C$) \eqref{bami2} (see, e.g.,\cite{Dong,Thong}) and give carefully designed computational experiments to illustrate our results. Our computational results show that our proposed methods outperform the iterative methods \eqref{pppp3} and \eqref{mm2}. Furthermore, our result  complements some recent results on inertial type algorithms (see, e.g., \cite{Alvarez, Attouch, Attouch2, Attouch3, AttouchPey, Beck, Bot2, Bot3, CChen, Lorenz, Mainge2, Ochs, Polyak2}).\\

\noindent
The paper is organized as follows: We first recall some basic
definitions and results in Section~\ref{Sec:Prelims}. Some discussions
about the proposed inertial projection-type method are given in
Section~\ref{Sec:Method}. The weak convergence analysis of
our algorithm is then investigated
in Section~\ref{Sec:Convergence}. We give the rate of convergence of our proposed method in Section~\ref{Sec:Rate} and some numerical experiments can be found in Section~\ref{Sec:Numerics}. We conclude
with some final remarks in Section~\ref{Sec:Final}.

\section{Preliminaries}\label{Sec:Prelims}

\noindent First, we recall some properties of the projection, cf.\
\cite{Bauschkebook} for more details. For any point $u \in H$, there exists a unique point $P_C u
\in C$ such that
$$\|u-P_C u\|\leq\|u-y\|,~\forall y \in C.$$
$P_C$ is called the {\it metric projection } of $H$ onto $C$. We
know that $P_C$ is a nonexpansive mapping of $H$ onto $C$. It is
also known that $P_C$ satisfies
\begin{equation}\label{a21}
   \langle x-y, P_C x-P_C y \rangle \geq \|P_C x-P_C y\|^2,~~\forall x, y \in H.
\end{equation}
In particular, we get from \eqref{a21} that
\begin{equation}\label{load}
   \langle x-y, x-P_C y \rangle \geq \|x-P_C y\|^2,~~\forall x \in C, y \in H.
\end{equation}
Furthermore, $P_C x$ is characterized by the properties
\begin{equation}\label{a22}
   P_Cx\in C\quad \text{and} \quad \langle x-P_C x,P_C x-y\rangle\geq0,~\forall y\in C.
\end{equation}
Further properties of the metric projection can be found, for example, in Section 3 of \cite{Goebel}.\\

\noindent
The following lemmas will be used in our convergence analysis.

\begin{lem}\label{lm2}
The following statements hold in $ H $:
\begin{itemize}
   \item[(a)] $ \|x+y\|^2=\|x\|^2+2\langle x,y\rangle+\|y\|^2 $ for all
      $ x, y \in H $;
   \item[(b)] $ 2 \langle x-y, x-z \rangle = \| x-y \|^2 + \| x-z \|^2 -
      \| y-z \|^2 $ for all $ x,y,z \in H $;
   \item[(c)] $\|\alpha x+(1-\alpha)y\|^2 =\alpha \|x\|^2+(1-\alpha)\|y\|^2
   -\alpha(1-\alpha)\|x-y\|^2$ for all $x,y \in H$ and $\alpha \in \mathbb{R}$.
%   \item[(d)] $\|x+y\|^2\leq\|x\|^2+2\langle y,x+y\rangle$ for all $x, y \in H$.

\end{itemize}
\end{lem}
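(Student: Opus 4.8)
The plan is to prove all three identities by direct expansion, using only bilinearity and symmetry of the inner product together with the defining relation $\|u\|^2=\langle u,u\rangle$; no deeper structure of $H$ is needed, and parts (b) and (c) can in fact be reduced to part (a).

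For (a), I would simply write $\|x+y\|^2=\langle x+y,x+y\rangle$ and distribute:
\[
\|x+y\|^2=\langle x,x\rangle+\langle x,y\rangle+\langle y,x\rangle+\langle y,y\rangle=\|x\|^2+2\langle x,y\rangle+\|y\|^2,
\]
where the middle step uses symmetry $\langle x,y\rangle=\langle y,x\rangle$. This is the whole argument.

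For (b), the cleanest route is to expand each of the three squared norms on the right-hand side via (a) (applied to the differences, i.e.\ with $y$ replaced by $-y$, etc.), obtaining $\|x-y\|^2=\|x\|^2-2\langle x,y\rangle+\|y\|^2$ and similarly for $\|x-z\|^2$ and $\|y-z\|^2$. Adding the first two and subtracting the third, the terms $\|y\|^2$ and $\|z\|^2$ cancel and one is left with $2\|x\|^2-2\langle x,y\rangle-2\langle x,z\rangle+2\langle y,z\rangle$, which is exactly $2\langle x-y,x-z\rangle$ after one more expansion of that inner product. Alternatively, one may substitute $x-y=(x-z)-(y-z)$ into (a), but the term-by-term comparison is the most transparent.

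For (c), I would apply (a) to the vectors $\alpha x$ and $(1-\alpha)y$ to get
\[
\|\alpha x+(1-\alpha)y\|^2=\alpha^2\|x\|^2+2\alpha(1-\alpha)\langle x,y\rangle+(1-\alpha)^2\|y\|^2,
\]
and then expand the claimed right-hand side $\alpha\|x\|^2+(1-\alpha)\|y\|^2-\alpha(1-\alpha)\|x-y\|^2$ using $\|x-y\|^2=\|x\|^2-2\langle x,y\rangle+\|y\|^2$; collecting the coefficients of $\|x\|^2$, $\|y\|^2$, and $\langle x,y\rangle$ gives $\alpha-\alpha(1-\alpha)=\alpha^2$, $(1-\alpha)-\alpha(1-\alpha)=(1-\alpha)^2$, and $2\alpha(1-\alpha)$ respectively, matching the previous display. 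Note this identity holds for every real $\alpha$, not only $\alpha\in[0,1]$, which is why the statement is phrased with $\alpha\in\mathbb{R}$. There is no genuine obstacle here: the only thing to be careful about is the sign bookkeeping when expanding the differences in (b) and the coefficient collection in (c), both of which are routine.
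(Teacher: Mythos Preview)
Your proof is correct; each identity follows by direct expansion using bilinearity and symmetry of the inner product, exactly as you outline. The paper itself offers no proof of this lemma, treating these as standard Hilbert space identities, so there is nothing to compare against.
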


\begin{lem}\label{la5}(see \cite[Lem.\ 3]{alvarez}) Let $\{ \psi_n\}$, $\{ \delta_n\}$ and $\{ \alpha_n\}$ be the sequences in $[0,+\infty)$ such that $\psi_{n+1}\leq \psi_n+\alpha_n(\psi_n-\psi_{n-1})+\delta_n$ for all $n\geq1$, $\sum_{n=1}^{\infty}\delta_n<+\infty$ and there exists a real number $\alpha$ with $0\leq\alpha_n\leq\alpha<1$ for all $n\geq1$. Then the following hold:\\
$(i)~~\sum_{n\geq1}[\psi_n-\psi_{n-1}]_+<+\infty$, where $[t]_+=\max\{ t,0\}$;\\
(ii) there exists $\psi^*\in[0,+\infty)$ such that $\lim_{n\rightarrow+\infty}\psi_n=\psi^*$.
\end{lem}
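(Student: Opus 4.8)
The plan is to reduce everything to a single one-step estimate for the nonnegative numbers $\theta_n := [\psi_n-\psi_{n-1}]_+$, sum it to obtain (i), and then extract convergence in (ii) by a monotonicity argument. First I would exploit $\alpha_n\ge 0$ together with $\alpha_n(\psi_n-\psi_{n-1})\le \alpha_n[\psi_n-\psi_{n-1}]_+\le \alpha\,\theta_n$ to rewrite the hypothesis as $\psi_{n+1}-\psi_n\le \alpha\,\theta_n+\delta_n$. Since the right-hand side is nonnegative, taking positive parts of both sides gives the clean recursion
\begin{equation*}
\theta_{n+1}\le \alpha\,\theta_n+\delta_n,\qquad n\ge 1,
\end{equation*}
and unrolling it yields $\theta_{n+1}\le \alpha^{n}\theta_1+\sum_{k=1}^{n}\alpha^{\,n-k}\delta_k$.

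For (i) I would sum this bound over $n\ge 1$ and interchange the order of summation in the double sum (legitimate since all terms are nonnegative):
\begin{equation*}
\sum_{n=1}^{\infty}\theta_{n+1}\le \theta_1\sum_{n=1}^{\infty}\alpha^{n}+\sum_{k=1}^{\infty}\delta_k\sum_{n=k}^{\infty}\alpha^{\,n-k}=\frac{\alpha\,\theta_1}{1-\alpha}+\frac{1}{1-\alpha}\sum_{k=1}^{\infty}\delta_k<+\infty,
\end{equation*}
where finiteness uses $0\le\alpha<1$ and $\sum_{k}\delta_k<+\infty$. Hence $\sum_{n\ge 1}\theta_n<+\infty$.

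For (ii) I would introduce the auxiliary sequence $v_n:=\psi_n-\sum_{k=2}^{n}\theta_k$ (with the empty sum equal to $0$ when $n=1$). Then $v_{n+1}-v_n=(\psi_{n+1}-\psi_n)-\theta_{n+1}\le [\psi_{n+1}-\psi_n]_+-\theta_{n+1}=0$, so $\{v_n\}$ is nonincreasing; moreover $v_n\ge -\sum_{k=2}^{\infty}\theta_k>-\infty$ because $\psi_n\ge 0$ and part (i) holds. A nonincreasing sequence bounded below converges, say $v_n\to v^\ast$, and therefore $\psi_n=v_n+\sum_{k=2}^{n}\theta_k\to v^\ast+\sum_{k=2}^{\infty}\theta_k=:\psi^\ast$. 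Finally $\psi^\ast\ge 0$ since every $\psi_n\ge 0$.

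The argument is essentially routine; the only points needing a little care are the passage from $\psi_{n+1}-\psi_n\le \alpha\,\theta_n+\delta_n$ to the recursion for $\theta_{n+1}$ (one must note the right-hand side is already nonnegative before taking positive parts) and the Tonelli interchange when summing the geometric–convolution bound in (i).
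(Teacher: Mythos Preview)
Your proof is correct and is precisely the standard argument for this lemma. The paper itself does not supply a proof of this statement; it is merely quoted from \cite[Lem.~3]{alvarez}, so there is nothing to compare against beyond noting that your derivation of the recursion $\theta_{n+1}\le\alpha\theta_n+\delta_n$, its geometric unrolling, and the monotone auxiliary sequence $v_n=\psi_n-\sum_{k=2}^{n}\theta_k$ constitute exactly the classical route.
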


\begin{lem}\label{super}(see \cite[Lem.\ 2.39]{Bauschkebook})
Let $C$ be a nonempty set of $H$ and $\{ x_n\}$ be a sequence in $H$ such that the following two conditions hold:\\
(i) for any $x\in C$, $\lim_{n\rightarrow\infty}\| x_n-x\|$ exists;\\
(ii) every sequential weak cluster point of $\{ x_n\}$ is in $C$.\\
Then $\{ x_n\}$ converges weakly to a point in $C$.
\end{lem}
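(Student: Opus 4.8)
The plan is to prove this by the classical Opial-type argument, using only the algebraic identities of Lemma~\ref{lm2} together with the reflexivity of the Hilbert space $H$. First I would check that $\{x_n\}$ is bounded: since $C\neq\emptyset$, fix any $x\in C$; by hypothesis (i) the numerical sequence $\{\|x_n-x\|\}$ converges, hence is bounded, so $\{x_n\}$ lies in a ball of $H$. Because $H$ is a Hilbert space it is reflexive, so $\{x_n\}$ possesses at least one weak sequential cluster point, and by hypothesis (ii) every such point lies in $C$.

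The core of the argument is to show that this cluster point is \emph{unique}. Suppose $p$ and $q$ are weak sequential cluster points, say $x_{n_k}\rightharpoonup p$ and $x_{m_j}\rightharpoonup q$. By (ii), $p,q\in C$, so by (i) both $\lim_{n\to\infty}\|x_n-p\|$ and $\lim_{n\to\infty}\|x_n-q\|$ exist. Expanding by Lemma~\ref{lm2}(a),
\[
\|x_n-p\|^2=\|x_n-q\|^2+2\langle x_n-q,\,q-p\rangle+\|q-p\|^2,
\]
and since the two norm sequences converge, the limit $\ell:=\lim_{n\to\infty}\langle x_n-q,\,q-p\rangle$ exists (this is the point where it matters that the inner-product limit is taken along the \emph{full} sequence). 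Letting $n\to\infty$ along $\{x_{n_k}\}$ and using that $\langle\,\cdot\,,q-p\rangle$ is weakly continuous gives $\ell=\langle p-q,\,q-p\rangle=-\|p-q\|^2$, while letting $n\to\infty$ along $\{x_{m_j}\}$ gives $\ell=\langle q-q,\,q-p\rangle=0$. Hence $\|p-q\|^2=0$, i.e.\ $p=q$.

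Finally I would conclude: a bounded sequence in a Hilbert space all of whose weak sequential cluster points coincide with a single point $p$ converges weakly to $p$. Indeed, if it did not, there would be a weak neighbourhood of $p$ and a subsequence staying outside it; by reflexivity that subsequence would have a weak cluster point different from $p$, contradicting uniqueness. Since $p\in C$ by (ii), this yields $x_n\rightharpoonup p\in C$.

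The main (and essentially only) obstacle is to set up the two-cluster-point computation correctly — in particular to justify that $\ell$ is a genuine limit along the whole sequence before evaluating it along the two subsequences to force $p=q$. The remaining ingredients (boundedness, existence of a weak cluster point, and the passage from "unique cluster point" to "weak convergence") are routine consequences of the reflexivity of $H$.
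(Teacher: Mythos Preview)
Your argument is correct and is exactly the standard Opial-type proof of this classical result. Note, however, that the paper does not supply its own proof of Lemma~\ref{super}; it merely cites \cite[Lem.~2.39]{Bauschkebook}. So there is nothing in the paper to compare your proposal against beyond observing that your write-up reproduces the usual proof found in that reference.
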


\noindent The following lemmas were given in $\mathbb{R}^n$ in
\cite{He2}. The proof of the lemmas are the same if given in
infinite dimensional real Hilbert spaces. Hence, we state the lemmas
and omit the proof in real Hilbert spaces.

\begin{lem}\label{new1}
Let $C$ be a nonempty closed and convex subset of $H$. Let $h$ be a real-valued function on $H$ and define
$K:=\{x: h(x)\leq 0\}$. If $K$ is nonempty and $h$ is
Lipschitz continuous on $C$ with modulus $\theta>0$, then
$${\rm dist}(x,K) \geq\theta^{-1}\max \{h(x),0\},~\forall x \in C,$$
\noindent where ${\rm dist}(x,K)$ denotes the distance function from
$x$ to $K$.
\end{lem}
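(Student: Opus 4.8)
The plan is to read off the inequality directly from the definition of the distance function, using nothing beyond the Lipschitz estimate for $h$, after disposing of a trivial case. First I would handle $x\in C$ with $h(x)\le 0$: then $x\in K$, so ${\rm dist}(x,K)=0$, while the right-hand side $\theta^{-1}\max\{h(x),0\}=0$ as well, and the claim holds with equality. Hence it suffices to treat $x\in C$ with $h(x)>0$, in which case $\max\{h(x),0\}=h(x)$; note also that $K\ne\emptyset$ guarantees that ${\rm dist}(x,K)=\inf_{y\in K}\|x-y\|$ is a well-defined nonnegative real number.

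For this remaining case I would fix an arbitrary $y\in K$, so that $h(y)\le 0$, and combine this with the Lipschitz continuity of $h$ with modulus $\theta$:
$$h(x)\le h(x)-h(y)\le |h(x)-h(y)|\le \theta\,\|x-y\|,$$
whence $\|x-y\|\ge \theta^{-1}h(x)$. Since $y\in K$ was arbitrary, passing to the infimum over $y\in K$ yields ${\rm dist}(x,K)\ge \theta^{-1}h(x)=\theta^{-1}\max\{h(x),0\}$, which is the assertion.

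The argument is very short, so there is no serious obstacle; the one point that deserves care is the displayed chain of inequalities, which applies the $\theta$-Lipschitz bound to the pair $x$ and $y\in K$. Since $x\in C$ but the competitor $y$ need not lie in $C$, the estimate should be read as holding on a convex set containing both $C$ and $K$ — in particular on all of $H$, which is the situation in every application in this paper, where $h$ is affine, $h(w)=\langle c, w-y_n\rangle$, and is globally Lipschitz with modulus $\theta=\|c\|$. This is exactly what is intended in the cited statement \cite{He2}, and since neither compactness nor any finite-dimensional feature is used, the proof carries over verbatim from $\mathbb{R}^n$ to an arbitrary real Hilbert space.
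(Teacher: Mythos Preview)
Your argument is correct and is the standard direct proof. The paper does not supply its own proof of this lemma: it cites \cite{He2} and remarks that the $\mathbb{R}^n$ argument carries over unchanged to Hilbert space, so there is no independent approach to compare against. Your caveat about the domain of the Lipschitz hypothesis is apt and matches how the lemma is actually applied later in the paper, where each $h_n$ is affine and hence globally Lipschitz on $H$.
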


\begin{lem}\label{new3}
Let $C$ be a nonempty closed and convex subset of $H$, $y:=P_C(x)$ and $x^* \in C$. Then
\begin{equation}\label{SpCseEst}
    \|y-x^*\|^2\leq\|x-x^*\|^2-\|x-y\|^2.
\end{equation}
\end{lem}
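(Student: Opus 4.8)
The plan is to read \eqref{SpCseEst} off the variational characterization of the metric projection. Since $y=P_C(x)$ and the test point $x^{*}$ belongs to $C$, the second relation in \eqref{a22}, with its free variable specialized to $x^{*}$, gives the obtuse-angle inequality
$$\langle x-y,\ y-x^{*}\rangle\ \geq\ 0 .$$
This is the only nontrivial input; the rest is an elementary identity.

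I would then split $x-x^{*}=(x-y)+(y-x^{*})$ and expand the square using Lemma \ref{lm2}(a):
$$\|x-x^{*}\|^{2}=\|x-y\|^{2}+2\langle x-y,\ y-x^{*}\rangle+\|y-x^{*}\|^{2}.$$
Solving for $\|y-x^{*}\|^{2}$ yields
$$\|y-x^{*}\|^{2}=\|x-x^{*}\|^{2}-\|x-y\|^{2}-2\langle x-y,\ y-x^{*}\rangle ,$$
and dropping the final term, which is nonpositive by the inequality above, yields exactly \eqref{SpCseEst}.

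A second, equally short route sets the free variable in the firm-nonexpansiveness inequality \eqref{a21} equal to $x^{*}$ and, using $P_C x^{*}=x^{*}$, obtains $\langle x-x^{*},\,y-x^{*}\rangle\geq\|y-x^{*}\|^{2}$; combining this with the polarization identity Lemma \ref{lm2}(b) for the triple $(x^{*},y,x)$ gives the same bound. I do not expect any genuine obstacle here: the argument uses no compactness or finite dimensionality, so it is word-for-word the proof in \cite{He2}, which is why the authors are content to state the lemma without proof in the Hilbert-space setting; the only point worth a sentence is precisely this observation that nothing is lost in passing from $\mathbb{R}^{n}$ to a general real Hilbert space $H$.
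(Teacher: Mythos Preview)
Your argument is correct and is exactly the standard one; as you anticipated, the paper does not supply its own proof but simply cites \cite{He2} and remarks that the $\mathbb{R}^n$ argument carries over verbatim to a real Hilbert space. Your derivation from \eqref{a22} and Lemma~\ref{lm2}(a) is precisely that argument, so nothing further is needed.
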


\noindent
The following lemma was stated in \cite[Prop.\ 2.11]{Iusem1}, see
also \cite[Prop.\ 4]{Iusem3}.

\begin{lem}\label{lm28a}
Let $H_1$ and $H_2$ be two real Hilbert spaces. Suppose
$F:H_1\rightarrow H_2$ is uniformly continuous on bounded subsets of $H_1$
and $M$ is a bounded subset of $H_1$. Then $F(M)$ is bounded.
\end{lem}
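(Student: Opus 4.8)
The plan is to reduce the statement to a chaining argument along line segments, which sidesteps the fact that bounded sets in an infinite-dimensional Hilbert space need not be totally bounded (so the naive ``cover $M$ by finitely many small balls'' proof is unavailable).

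First, I would enclose $M$ in a closed ball: since $M$ is bounded there is $R>0$ with $M\subseteq \bar B:=\{x\in H_1:\|x\|\le R\}$, and $\bar B$ is itself a bounded subset of $H_1$, so by hypothesis $F$ is uniformly continuous on $\bar B$. Applying uniform continuity with $\varepsilon=1$ yields a $\delta>0$ such that $x,y\in\bar B$ with $\|x-y\|\le\delta$ imply $\|F(x)-F(y)\|\le 1$. Then I would fix once and for all an integer $N$ with $N\ge R/\delta$.

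Next, for an arbitrary $x\in M$ I would introduce the sample points $x_k:=\frac{k}{N}x$ for $k=0,1,\dots,N$ lying on the segment $[0,x]$. By convexity of $\bar B$ every $x_k\in\bar B$, and $\|x_{k+1}-x_k\|=\frac1N\|x\|\le \frac RN\le\delta$, hence $\|F(x_{k+1})-F(x_k)\|\le 1$ for each $k$. The triangle inequality then gives $\|F(x)-F(0)\|\le\sum_{k=0}^{N-1}\|F(x_{k+1})-F(x_k)\|\le N$, so $\|F(x)\|\le\|F(0)\|+N$. Since $N$ is independent of the chosen $x\in M$, this is a uniform bound and $F(M)$ is bounded.

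The only genuine subtlety — the point that has to be handled with care — is exactly the one noted above: one cannot infer boundedness of $F(M)$ from any compactness or total-boundedness of $M$, because these typically fail in infinite dimensions; it is the convexity of balls (equivalently, the availability of straight-line paths from a fixed base point that remain inside a fixed bounded set) that lets the single uniform $\delta$ play the role of a finite cover. Everything else is a routine triangle-inequality estimate.
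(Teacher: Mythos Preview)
Your argument is correct. The chaining along the segment $[0,x]$ inside the fixed ball $\bar B$ is exactly the right idea, and your observation that total boundedness of $M$ cannot be invoked in infinite dimensions is well taken; the convexity of $\bar B$ is what makes the single uniform $\delta$ suffice.

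As for comparison with the paper: the paper does not actually prove this lemma. It merely states the result and refers the reader to \cite[Prop.~2.11]{Iusem1} and \cite[Prop.~4]{Iusem3} for a proof. So your write-up supplies what the paper omits, and there is nothing in the paper's own text to compare your approach against. Your proof is self-contained and would serve as a perfectly good replacement for the citation.
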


\noindent
Finally, the following result states the equivalence
between a primal and a weak form of variational inequality for continuous,
monotone operators.

\begin{lem}\label{lm27a}(\cite[Lem.\ 7.1.7]{wtak})
Let $C$ be a nonempty, closed, and convex subset of $H$.
Let $F:C\rightarrow H$ be a continuous, monotone mapping and $z \in C$. Then
$$
   z \in {\rm SOL} \Longleftrightarrow \langle F(x), x-z\rangle
   \geq 0 \quad \text{for all } x\in C.
$$
\end{lem}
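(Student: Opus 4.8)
The plan is to prove the two implications separately; this is the classical Minty-type equivalence, so the argument is short and relies only on monotonicity and continuity of $F$ together with convexity of $C$.

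For the forward implication, I would assume $z \in {\rm SOL}$, fix an arbitrary $x \in C$, and split
$$\langle F(x), x-z\rangle = \langle F(x)-F(z), x-z\rangle + \langle F(z), x-z\rangle.$$
The first term is nonnegative because $F$ is monotone on $C$, and the second is nonnegative because $z$ solves ${\rm VI}(F,C)$ and $x \in C$ in \eqref{bami2}. Adding them gives $\langle F(x), x-z\rangle \ge 0$, and since $x\in C$ was arbitrary this is the asserted weak inequality.

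For the reverse implication, I would assume $\langle F(x), x-z\rangle \ge 0$ for all $x\in C$ and fix an arbitrary $y\in C$. By convexity of $C$, the point $x_t := z + t(y-z) = (1-t)z + ty$ lies in $C$ for every $t\in(0,1]$. Substituting $x=x_t$ into the hypothesis and using $x_t - z = t(y-z)$ yields $t\,\langle F(x_t), y-z\rangle \ge 0$, hence $\langle F(x_t), y-z\rangle \ge 0$. Letting $t\downarrow 0$ we have $x_t \to z$, so continuity of $F$ gives $F(x_t)\to F(z)$, and the inner product passes to the limit: $\langle F(z), y-z\rangle \ge 0$. As $y\in C$ was arbitrary, $z\in{\rm SOL}$.

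There is essentially no obstacle here; the only point worth care is that passing to the limit in the last step needs only continuity of $F$ along each segment in $C$ (hemicontinuity), so neither a Lipschitz constant nor sequential weak continuity is required, and convexity of $C$ enters exactly once, to guarantee $x_t\in C$.
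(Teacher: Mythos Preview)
Your proof is correct and is precisely the standard Minty-type argument: monotonicity handles the forward direction, and the segment trick $x_t=(1-t)z+ty$ together with continuity (indeed, hemicontinuity suffices) handles the converse. The paper itself does not supply a proof of this lemma; it is merely quoted from \cite[Lem.~7.1.7]{wtak}, so there is nothing in the paper to compare against beyond noting that your argument is exactly the classical one found in that reference.
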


\section{Inertial Projection-type Method}\label{Sec:Method}

Let us first state the assumptions
that we will assume to hold for the rest of this paper.

\begin{asm}\label{Ass:VI}
Suppose that the following hold:
\begin{itemize}
\item[{\rm(a)}] The feasible set $ C $ is a nonempty closed affine
      subset of the real Hilbert space $ H $.
\item[{\rm(b)}] $ F: C \to H $ is monotone and  uniformly continuous on bounded subsets of $H$.
\item[{\rm(c)}] The solution set $\text{SOL}$ of VI$(F,C)$ is nonempty.
\end{itemize}
\end{asm}

\begin{asm}\label{Ass:Parameters}
Suppose the real sequence $ \{ \alpha_n \} $ and constants $ \beta, \delta, \sigma>0$
satisfy the following conditions:
\begin{itemize}
   \item[(a)] $ \{ \alpha_n \} \subset (0,1) $
       with  $0\leq \alpha_n \leq \alpha_{n+1} \leq \alpha<1$ for all $n$.
   \item[(b)] $\delta> \frac{\alpha(1+\alpha)(\alpha+\delta \sigma)+\alpha\sigma\delta(\alpha+\delta\sigma)}{\sigma}$
   and $\beta<\frac{\delta\sigma}{\alpha+\delta\sigma}-\alpha(1+\alpha)-\alpha\sigma\delta$.
\end{itemize}
\end{asm}

\noindent
Let
$$
   r(x) := x - P_C (x-F(x))
$$
stand for the residual equation.

\noindent Observe that if we take $y=x-F(x)$ in \eqref{load}, then we
have
\begin{equation}\label{load2}
   \langle F(x),r(x)\rangle\geq\|r(x)\|^2,~\forall x\in C.
\end{equation}
\noindent We now introduce our proposed method below.

\begin{algorithm}[H]
\caption{Inertial Projection-type Method}\label{Alg:AlgL}
\begin{algorithmic}[1]
\State Choose sequence $ \{ \alpha_n \} $ and $\sigma \in (0,1)$ such that the conditions from Assumption~\ref{Ass:Parameters} hold,
      and take $\gamma \in (0,1)$.
      Let $ x_0= x_1 \in H $ be a given starting point. Set $ n := 1 $.
\State Set
       \begin{equation*}
         w_n := x_n+\alpha_n (x_n-x_{n-1}).
       \end{equation*}
Compute $z_n:=P_C(w_n-F(w_n))$. If $r(w_n)=w_n-z_n=0$: STOP.
\State Compute $y_n=w_n-\gamma^{m_n}r(w_n)$, where $m_n$ is the smallest nonnegative integer satisfying
         \begin{equation}\label{ee31}
         \langle F(y_n),r(w_n)\rangle\geq\frac{\sigma}{2}\|r(w_n)\|^2.
         \end{equation}
Set $\eta_n:=\gamma^{m_n}$.
\State Compute
         \begin{equation}\label{e31}
         x_{n+1}=P_{C_n}(w_n),
         \end{equation}
where $C_n=\{x: h_n(x)\leq0\}$ and
         \begin{equation}\label{ego}
         h_n(x):=\langle F(y_n),x-y_n\rangle.
         \end{equation}
\State Set $n\leftarrow n+1$ and \textbf{goto 2}.
\end{algorithmic}
\end{algorithm}
\noindent
It is clear that $ r(w_n) = 0 $ implies that we are at a solution of the
variational inequality. In our convergence theory, we will implicitly
assume that this does not occur after finitely many iterations, so that
Algorithm~\ref{Alg:AlgL} generates an infinite sequence satisfying,
in particular, $ r(w_n) \neq 0 $ for all $ n \in \mathbb{N} $. We will
see that this property implies that Algorithm~\ref{Alg:AlgL} is
well defined.

\begin{rem}\label{Rem:Simple}
(a) Algorithm~\ref{Alg:AlgL} requires, at each iteration, only one
projection onto the feasible set $C $ and another projection onto the half-space $C_n$ (see \cite{Cegielskibook} for formula for computing projection onto half-space), which is less expensive than the extragradient method
especially for the case when computing the projection onto the feasible set $C$ is a dominating task during iteration. \\[-1mm]
%In addition,
%(a) Observe that in Algorithm~\ref{Alg:AlgL}, $x_n \in C, \forall n \geq 1$ and hence $x_n=P_C(x_n), \forall n \geq 1$. If $\alpha_n=0$, then computing projection in Step 2 is not needed. Thus, $w_n=x_n$ in this case. In addition, if $C$ is nonempty closed affine subset of $H$, then Step 2 of Algorithm~\ref{Alg:AlgL} does not need the projection onto $C$ and this reduces to $w_n=x_n+\alpha_n (x_n-x_{n-1})$.\\[-1mm]

\noindent (b) Our Algorithm~\ref{Alg:AlgL} is much more applicable than \eqref{pppp3} and \eqref{mm2} in the sense that algorithm \eqref{pppp3} and \eqref{mm2} are applicable only for \textit{monotone and $L$-Lipschitz-continuous mapping} $F$. Thus, the $L$-Lipschitz constant of $F$ or an estimate of it is needed in order to implement the iterative method \eqref{pppp3} but our Algorithm~\ref{Alg:AlgL} is applicable for a much more general class of \textit{monotone and uniformly continuous mapping} $F$. \\[-1mm]

\noindent (c) We observe that the step-size rule in Step 3 involves a couple
of evaluations of $ F $, but these are often much less expensive
than projections onto $ C $ which was considered in \cite{Khobotov, Marcotte}. Furthermore, using the fact that $F$ is continuous and \eqref{load2}, we can see that Step 3 in Algorithm~\ref{Alg:AlgL} is
well-defined.\hfill $\Diamond$
\end{rem}

\begin{lem}\label{new2}
Let the function $h_n$ be defined by
\eqref{ego}. Then
$$h_n(w_n)\geq\frac{\sigma\eta_n}{2}\|w_n-z_n\|^2.$$
\noindent In particular, if $w_n\neq z_n$, then
$h_n(w_n)>0$. If $x^*\in \text{SOL}$, then $h_n(x^*)\leq 0$.
 \end{lem}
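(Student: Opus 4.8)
My plan is to dispatch the two displayed estimates for $h_n(w_n)$ by a one-line substitution and to reduce the inequality $h_n(x^*)\le 0$ to Minty's characterization, Lemma~\ref{lm27a}. For the first two assertions I would start from Step~3, which gives $y_n=w_n-\gamma^{m_n}r(w_n)$ with $r(w_n)=w_n-z_n$, so that $w_n-y_n=\eta_n r(w_n)=\eta_n(w_n-z_n)$ where $\eta_n=\gamma^{m_n}$. Substituting this into the definition \eqref{ego} of $h_n$ gives $h_n(w_n)=\langle F(y_n),w_n-y_n\rangle=\eta_n\langle F(y_n),r(w_n)\rangle$, and the Armijo rule \eqref{ee31} bounds $\langle F(y_n),r(w_n)\rangle$ below by $\tfrac{\sigma}{2}\|r(w_n)\|^2$; hence $h_n(w_n)\ge\tfrac{\sigma\eta_n}{2}\|w_n-z_n\|^2$. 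Since $\eta_n=\gamma^{m_n}>0$, the right-hand side is strictly positive whenever $w_n\neq z_n$, which yields $h_n(w_n)>0$ in that case.

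For the last assertion, fix $x^*\in\text{SOL}$ and note that $h_n(x^*)=\langle F(y_n),x^*-y_n\rangle=-\langle F(y_n),y_n-x^*\rangle$, so it suffices to prove $\langle F(y_n),y_n-x^*\rangle\ge 0$; by Lemma~\ref{lm27a} this holds provided $y_n\in C$. I would establish $y_n\in C$ as follows: $z_n=P_C(w_n-F(w_n))\in C$ by definition of the metric projection; the inertial point $w_n=(1+\alpha_n)x_n-\alpha_n x_{n-1}$ is an affine combination of the preceding iterates and therefore lies in the affine set $C$ (Assumption~\ref{Ass:VI}(a)); and $y_n=w_n-\eta_n r(w_n)=(1-\eta_n)w_n+\eta_n z_n$ is a convex combination of $w_n$ and $z_n$, as $\eta_n\in(0,1]$. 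Convexity of $C$ then gives $y_n\in C$, and Lemma~\ref{lm27a} applied at the feasible point $y_n$ gives $\langle F(y_n),y_n-x^*\rangle\ge 0$, i.e., $h_n(x^*)\le 0$.

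The two estimates for $h_n(w_n)$ are purely computational, so the main point requiring care is the membership $y_n\in C$ used for the last assertion. This is the only place where I rely on $C$ being affine rather than merely convex, since the inertial extrapolation $x_n\mapsto w_n$ would in general push the iterate outside a convex feasible set; the companion fact that the line search in Step~3 terminates, so that $\eta_n=\gamma^{m_n}\in(0,1]$ is well defined, is already guaranteed by \eqref{load2} and Remark~\ref{Rem:Simple}(c).
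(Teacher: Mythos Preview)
Your argument for $h_n(w_n)\ge\tfrac{\sigma\eta_n}{2}\|w_n-z_n\|^2$ and its strict positivity is exactly the paper's: substitute $w_n-y_n=\eta_n r(w_n)$ into \eqref{ego} and invoke the Armijo condition \eqref{ee31}. For $h_n(x^*)\le 0$ you also follow the paper in reducing to Minty's characterization (Lemma~\ref{lm27a}) applied at $y_n$; the paper simply writes ``In particular, $\langle F(y_n),y_n-x^*\rangle\ge0$'' without further comment, whereas you attempt to justify the hypothesis $y_n\in C$ explicitly.

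That extra justification, however, has a gap. You argue that $w_n$ lies in the affine set $C$ because it is an affine combination of $x_n$ and $x_{n-1}$, but this requires $x_n,x_{n-1}\in C$, which the algorithm does not secure: the initial points are taken in $H$, and each subsequent iterate is $x_{n+1}=P_{C_n}(w_n)$ with $C_n$ a half-space rather than $C$, so in general $x_{n+1}\notin C$. Consequently your chain $x_n\in C\Rightarrow w_n\in C\Rightarrow y_n\in C$ does not get off the ground. The paper's proof sidesteps this by not discussing the feasibility of $y_n$ at all, so the issue is present there as well; your attempt simply makes the hidden assumption visible. If you want to close the gap within the paper's framework you would need either to assume $x_0=x_1\in C$ and show inductively that $x_n\in C$ (which would require $C\subset C_n$, not asserted here), or to take $F$ defined and monotone on all of $H$ and argue directly from monotonicity plus the VI that $\langle F(y_n),y_n-x^*\rangle\ge\langle F(x^*),y_n-x^*\rangle$, the latter still needing $y_n\in C$ unless one strengthens the solution notion.
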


\begin{proof}
Since $y_n=w_n-\eta_n(w_n-z_n)$, using \eqref{ee31} we have
$$\begin{aligned}
    h_n(w_n)
    &=\langle F(y_n),w_n-y_n\rangle\\
    &=\eta_n\langle F(y_n),w_n-z_n\rangle\geq\eta_n\frac{\sigma}{2}\|w_n-z_n\|^2\geq0.
\end{aligned}$$
If $w_n\neq z_n$, then $h_n(w_n)>0$.
Furthermore, suppose $x^* \in \text{SOL}$. Then by Lemma \ref{lm27a} we have
$\langle F(x), x-x^*\rangle   \geq 0 \quad \text{for all } x\in C.$
In particular, $\langle F(y_n), y_n-x^*\rangle   \geq 0$ and hence $h_n(x^*) \leq 0.$
\end{proof}

\noindent
Observe that, in finding $\eta_n$, the operator $F$ is evaluated (possibly)
many times, but no extra projections onto the set $C$ are needed. This
is in contrast to a couple of related algorithms for the solution of
monotone variational inequalities where the calculation of a suitable
step-size requires (possibly) many projections onto $ C $, see, e.g.,
\cite{Denisov, Khobotov, Tseng}.

\section{Convergence Analysis}\label{Sec:Convergence}
\noindent
We present our main result in this section. To this end, we begin with a result
that shows that the sequence $ \{ x_n \} $ generated by
Algorithm~\ref{Alg:AlgL} is bounded under the given
assumptions.

\begin{lem}\label{nece1}
Let $\{x_n\}$ be generated by Algorithm~\ref{Alg:AlgL}. Then under Assumptions~\ref{Ass:VI}
and \ref{Ass:Parameters}, we have that $\{x_n\}$ is bounded.
\end{lem}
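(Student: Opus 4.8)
The plan is to track the quantity $\Gamma_n := \|x_n - x^*\|^2$ for a fixed but arbitrary $x^* \in \text{SOL}$ and show it satisfies a recursion of the form amenable to Lemma~\ref{la5}, from which boundedness follows. First I would write $x_{n+1} = P_{C_n}(w_n)$ and use Lemma~\ref{new3} (with $C$ replaced by the half-space $C_n$), noting $x^* \in C_n$ because $h_n(x^*) \le 0$ by Lemma~\ref{new2}; this gives $\|x_{n+1} - x^*\|^2 \le \|w_n - x^*\|^2 - \|w_n - x_{n+1}\|^2$. Next I would expand $\|w_n - x^*\|^2$ using $w_n = x_n + \alpha_n(x_n - x_{n-1})$ and Lemma~\ref{lm2}(a),(c), obtaining $\|w_n - x^*\|^2 = (1+\alpha_n)\|x_n - x^*\|^2 - \alpha_n\|x_{n-1}-x^*\|^2 + \alpha_n(1+\alpha_n)\|x_n - x_{n-1}\|^2$.

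The crux is to control the two "error" terms: $\alpha_n(1+\alpha_n)\|x_n - x_{n-1}\|^2$ coming from the inertial step, and the term $-\|w_n - x_{n+1}\|^2$ coming from the projection — which I want to use to absorb the inertial error. To do this I would derive a lower bound on $\|w_n - x_{n+1}\|$ in terms of the residual $\|r(w_n)\| = \|w_n - z_n\|$. The natural route is Lemma~\ref{new1}: the function $h_n$ is affine (hence Lipschitz with modulus $\|F(y_n)\|$ on $C$), so $\text{dist}(w_n, C_n) \ge \|F(y_n)\|^{-1}\max\{h_n(w_n),0\}$, and $\|w_n - x_{n+1}\| = \text{dist}(w_n, C_n)$. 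Combining with Lemma~\ref{new2}'s bound $h_n(w_n) \ge \frac{\sigma\eta_n}{2}\|w_n - z_n\|^2$ gives $\|w_n - x_{n+1}\| \ge \frac{\sigma\eta_n}{2\|F(y_n)\|}\|w_n - z_n\|^2$. To make this usable I also need an upper bound on $\|F(y_n)\|$; since $\{y_n\}$ will be shown to lie in a bounded set (this requires a small bootstrapping argument, or one argues locally), Lemma~\ref{lm28a} gives a uniform bound $\|F(y_n)\| \le \mathcal{L}$. I would also need a complementary estimate expressing $\|x_{n+1} - x^*\|^2$ in a way that reveals a negative multiple of $\|w_n - z_n\|^2$ or $\eta_n\|w_n - z_n\|^2$ — typically via the characterization \eqref{a22} of the projection onto $C_n$ applied with the point $x^* \in C_n$, yielding a term $-\frac{\eta_n\sigma}{2}\|w_n - z_n\|^2$ or similar, and carefully bounding the cross terms using Cauchy–Schwarz and Young's inequality with weights tuned so that the coefficients match the quantities $\alpha, \delta, \sigma, \beta$ from Assumption~\ref{Ass:Parameters}.

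Once those pieces are assembled, the recursion should read $\Gamma_{n+1} \le \Gamma_n + \alpha_n(\Gamma_n - \Gamma_{n-1}) + (\text{positive inertial term}) - (\text{negative residual term})$, where Assumption~\ref{Ass:Parameters}(b) is precisely the inequality guaranteeing that a telescoping-type manipulation — setting $\phi_n := \Gamma_n - \alpha_{n-1}\Gamma_{n-1} + (\text{correction})$ and using $\alpha_n \le \alpha_{n+1}$ — produces a genuinely decreasing (or bounded) auxiliary sequence, so that $\{\Gamma_n\}$, and hence $\{x_n\}$, is bounded; alternatively one verifies the hypotheses of Lemma~\ref{la5} directly after absorbing the negative term. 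The main obstacle I anticipate is bookkeeping: choosing the Young-inequality weights and collecting coefficients so that the final constant condition matches Assumption~\ref{Ass:Parameters}(b) exactly, together with the mild circularity in needing $\{y_n\}$ (equivalently $\{w_n\}$) bounded to invoke Lemma~\ref{lm28a} for the bound on $\|F(y_n)\|$ — this is resolved by noting $w_n$ lies on the segment from $x_{n-1}$-related data and running the estimate on the assumption of boundedness, then closing the loop, or by localizing the uniform-continuity bound.
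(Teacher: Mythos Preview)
Your first two steps---applying Lemma~\ref{new3} with $x^*\in C_n$ to get $\|x_{n+1}-x^*\|^2\le\|w_n-x^*\|^2-\|x_{n+1}-w_n\|^2$, and expanding $\|w_n-x^*\|^2$ via the inertial formula---match the paper exactly. The gap is in the third step: you propose to lower-bound $\|w_n-x_{n+1}\|$ by the residual $\|w_n-z_n\|$ (via Lemmas~\ref{new1} and \ref{new2}) and then use this ``negative residual term'' to absorb the inertial error $\alpha_n(1+\alpha_n)\|x_n-x_{n-1}\|^2$. This does not work, for two reasons. First, as you already flag, the bound $\|F(y_n)\|\le\mathcal L$ needs $\{y_n\}$ bounded, which needs $\{x_n\}$ bounded---genuine circularity that cannot be ``closed'' here because nothing else in the argument gives even local boundedness before this step. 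Second, and more seriously, the negative term you would obtain has the form $-c\,\eta_n^2\|w_n-z_n\|^4$, which is in no way comparable to $\|x_n-x_{n-1}\|^2$; there is no mechanism linking the residual to consecutive iterate differences, so the absorption you describe cannot be carried out.

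The paper's proof avoids the residual entirely at this stage. Instead it expands the negative term algebraically as
\[
\|x_{n+1}-w_n\|^2=\|(x_{n+1}-x_n)-\alpha_n(x_n-x_{n-1})\|^2
\]
and applies Young's inequality to the cross term with the specific weight $\rho_n=1/(\alpha_n+\delta\sigma)$. This produces a recursion purely in $\|x_{n+1}-x_n\|^2$ and $\|x_n-x_{n-1}\|^2$; setting $\varepsilon_n=\varphi_n-\alpha_n\varphi_{n-1}+\lambda_n\|x_n-x_{n-1}\|^2$ with $\lambda_n=\alpha_n(1+\alpha_n)+\alpha_n\sigma\delta$, one gets $\varepsilon_{n+1}-\varepsilon_n\le-\beta\|x_{n+1}-x_n\|^2$, and Assumption~\ref{Ass:Parameters}(b) is precisely the condition making the coefficient $\alpha_n\rho_n-1+\lambda_{n+1}\le-\beta$. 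This yields $\sum\|x_{n+1}-x_n\|^2<\infty$ and, via Lemma~\ref{la5}, existence of $\lim\|x_n-x^*\|$. The residual analysis (Lemmas~\ref{new1}, \ref{new2}, \ref{lm28a}) is deferred to Lemma~\ref{nece3}, \emph{after} boundedness is already in hand---which is exactly what resolves the circularity you noticed.
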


\begin{proof}
Let $x^*\in\text{SOL}$.
By Lemma~\ref{new3} we get (since $x^*\in
C_n$) that
\begin{eqnarray}\label{jj7}
    \|x_{n+1}-x^*\|^2&=&\|P_{C_n}(w_n)-x^*\|^2\leq\|w_n-x^*\|^2-\|x_{n+1}-w_n\|^2 \\
    &=&\|w_n-x^*\|^2-{\rm dist}^2(w_n,C_n)\nonumber.
\end{eqnarray}
Now, using Lemma \ref{lm2} (c), we have
%(recall that $x^*=P_C(x^*)$)
\begin{eqnarray}\label{asu4}
    \|w_n-x^*\|^2&=&\|(1+\alpha_n)(x_n-x^*)-\alpha_n(x_{n-1}-x^*)\|^2\nonumber\\
    &=& (1+\alpha_n)\|x_n-x^*\|^2-\alpha_n\|x_{n-1}-x^*\|\nonumber\\
    &&+\alpha_n(1+\alpha_n)\|x_n-x_{n-1}\|^2.
    \end{eqnarray}
Substituting \eqref{asu4} into \eqref{jj7}, we have
\begin{eqnarray}\label{asu5}
    \|x_{n+1}-x^*\|^2&\leq &(1+\alpha_n)\|x_n-x^*\|^2-\alpha_n\|x_{n-1}-x^*\|^2+\alpha_n(1+\alpha_n)\|x_n-x_{n-1}\|^2\nonumber \\
    &&-\|x_{n+1}-w_n\|^2.
\end{eqnarray}
We also have (using Lemma \ref{lm2} (a))
% and the fact that $x_{n+1}=P_C(x_{n+1})$)
\begin{eqnarray}\label{asu6}
    \|x_{n+1}-w_n\|^2&=&\|(x_{x_n+1}-x_n)-\alpha_n(x_n-x_{n-1})\|^2\nonumber \\
    &=& \|x_{x_n+1}-x_n\|^2+\alpha^2\|x_n-x_{n-1}\|^2\nonumber \\
    &&-2\alpha_n\langle x_{x_n+1}-x_n, x_n-x_{n-1}\rangle \nonumber \\
    &\geq& \|x_{x_n+1}-x_n\|^2+\alpha^2\|x_n-x_{n-1}\|^2\nonumber \\
    &&+\alpha_n\Big(-\rho_n\|x_{x_n+1}-x_n\|^2-\frac{1}{\rho_n} \|x_n-x_{n-1}\|^2\Big),
\end{eqnarray}
where
$\rho_n:=\frac{1}{\alpha_n+\delta\sigma}$.
Combining \eqref{asu5} and \eqref{asu6}, we get
\begin{eqnarray}\label{asu7}
 &&\|x_{n+1}-x^*\|^2-(1+\alpha_n)\|x_n-x^*\|^2+\alpha_n\|x_{n-1}-x^*\|^2\nonumber \\
&\leq& (\alpha_n\rho_n-1)\|x_{n+1}-x_n\|^2+\lambda_n\|x_n-x_{n-1}\|^2,
\end{eqnarray}
where
\begin{eqnarray}\label{asu8}
\lambda_n:=\alpha_n(1+\alpha_n)+\alpha_n \frac{1-\alpha_n\rho_n}{\rho_n}\geq 0
\end{eqnarray}
\noindent since $\alpha_n\rho_n <1$. Taking into account the choice of
$\rho_n$, we have
$$
\delta=\frac{1-\alpha_n\rho_n}{\sigma \rho_n}
$$
\noindent and from \eqref{asu8}, it follows that
\begin{eqnarray}\label{asu9}
\lambda_n&=&\alpha_n(1+\alpha_n)+\alpha_n \frac{1-\alpha_n\rho_n}{\rho_n}\nonumber \\
&\leq& \alpha(1+\alpha)+\alpha\sigma\delta.
\end{eqnarray}

Following the same arguments as in \cite{alvarez, Alvarez, Bot}, we define
$\varphi_n:=\|x_n-x^*\|^2, n \geq 1$ and
$\varepsilon_n:=\varphi_n-\alpha_n\varphi_{n-1}+\lambda_n\|x_n-x_{n-1}\|^2, n \geq 1.$
By the monotonicity of $\{\alpha_n\}$ and the fact that $\varphi_n \geq 0$, we have
$$
\varepsilon_{n+1}-\varepsilon_n \leq \varphi_{n+1}-(1+\alpha_n)\varphi_n
+\alpha_n\varphi_{n-1}+\lambda_{n+1}\|x_{n+1}-x_n\|^2-\lambda_n\|x_n-x_{n-1}\|^2.
$$
\noindent
Using \eqref{asu7}, we have
\begin{eqnarray}\label{asu10}
\varepsilon_{n+1}-\varepsilon_n&\leq& (\alpha_n\rho_n-1)\|x_{n+1}-x_n\|^2+\lambda_n\|x_n-x_{n-1}\|^2\nonumber\\
&&+\lambda_{n+1}\|x_{n+1}-x_n\|^2-\lambda_n\|x_n-x_{n-1}\|^2\nonumber \\
&=&(\alpha_n\rho_n-1+\lambda_{n+1})\|x_{n+1}-x_n\|^2.
\end{eqnarray}
We now claim that
\begin{eqnarray}\label{asu11}
\alpha_n\rho_n-1+\lambda_{n+1} \leq -\beta.
\end{eqnarray}
Indeed by the choice of $\rho_n$, we have
\begin{eqnarray*}
&&\alpha_n\rho_n-1+\lambda_{n+1} \leq -\beta\\
& \Leftrightarrow & \\
&&\alpha_n\rho_n-1+\lambda_{n+1} +\beta\leq 0\\
& \Leftrightarrow & \\
&& \lambda_{n+1} +\beta+\frac{\alpha_n}{\alpha_n+\delta\sigma}-1\leq 0\\
& \Leftrightarrow & \\
&&\lambda_{n+1} +\beta-\frac{\delta\sigma}{\alpha_n+\delta\sigma}\leq 0\\
& \Leftrightarrow & \\
&&(\alpha_n+\delta\sigma)(\lambda_{n+1} +\beta)\leq \delta\sigma
\end{eqnarray*}
\noindent Now, using \eqref{asu9}, we have
$$
(\alpha_n+\delta\sigma)(\lambda_{n+1} +\beta)
\leq ((\alpha+\delta\sigma)(\alpha(1+\alpha)\alpha\delta\sigma+\beta) \leq\delta\sigma,
$$
\noindent where the last inequality follows from Assumption~\ref{Ass:Parameters} (b). Hence, the claim
in \eqref{asu11} is true.\\

\noindent
Thus, it follows from \eqref{asu10} and \eqref{asu11} that
\begin{eqnarray}\label{asu12}
\varepsilon_{n+1}-\varepsilon_n\leq -\beta\|x_{n+1}-x_n\|^2.
\end{eqnarray}
The sequence $\{\varepsilon_n\}$ is non-increasing and the bounds of $\{\alpha_n\}$ delivers
\begin{eqnarray}\label{asu13}
-\alpha\varphi_{n-1}\leq\varphi_n -\alpha\varphi_{n-1} \leq \varepsilon_n \leq \varepsilon_1, n \geq 1.
\end{eqnarray}
It then follows that
\begin{eqnarray}\label{asu14}
\varphi_n\leq\alpha^n\varphi_0+\varepsilon_1\sum_{k=0}^{n-1}\alpha^k \leq \alpha^n\varphi_0+\frac{\varepsilon_1}{1-\alpha} , n \geq 1.
\end{eqnarray}
Combining \eqref{asu12} and \eqref{asu13}, we get
\begin{eqnarray}\label{addi}
\beta \sum_{k=1}^{n}\|x_{k+1}-x_k\|^2 &\leq& \varepsilon_1-\varepsilon_{n+1}\nonumber\\
&\leq& \varepsilon_1+\alpha \varphi_n\nonumber\\
&\leq& \alpha^{n+1}\varphi_0+\frac{\varepsilon_1}{1-\alpha}\nonumber\\
&\leq& \varphi_0+\frac{\varepsilon_1}{1-\alpha},
\end{eqnarray}
which shows that
\begin{eqnarray}\label{asu15}
\sum_{k=1}^{\infty}\|x_{k+1}-x_k\|^2<\infty.
\end{eqnarray}
Thus,
$\underset{n\rightarrow \infty}\lim \|x_{n+1}-x_n\|=0$.
From $w_n=x_n+\alpha_n(x_n-x_{n-1})$, we have
\begin{eqnarray*}
\|w_n-x_n\|&\leq&\alpha_n\|x_n-x_{n-1}\|\\
&\leq& \alpha \|x_n-x_{n-1}\|\rightarrow 0, n\rightarrow \infty.
\end{eqnarray*}
Similarly,
$$
\|x_{n+1}-w_n\| \leq \|x_{n+1}-x_n\|+\|x_n-w_n\|\rightarrow 0, n\rightarrow \infty.
$$
\noindent
Using Lemma \ref{la5}, \eqref{asu7}, \eqref{asu9} and \eqref{asu15}, we have that
$\underset{n\rightarrow \infty}\lim \|x_n-x^*\|$ exists. Hence, $\{x_n\}$ is bounded.
\end{proof}

\noindent In the next two lemmas, we show that certain subsequences
obtained in Algorithm~\ref{Alg:AlgL} are null subsequences. These
two lemmas are necessary in order to show that the weak limit of
$\{x_n\}$ is an element of $SOL$ and for our
weak convergence in Theorem~\ref{t32} below.

\begin{lem}\label{nece3}
Let $\{x_n\}$ generated by Algorithm~\ref{Alg:AlgL} above and
Assumptions~\ref{Ass:VI} and \ref{Ass:Parameters} hold.
Then
\begin{itemize}
\item[{\rm(a)}] $\displaystyle\lim_{n\rightarrow \infty} \eta_{n}\|w_n-z_n\|^2=0$;
\item[{\rm(b)}] $\displaystyle\lim_{n\rightarrow \infty} \|w_n-z_n\|=0.$
\end{itemize}
\end{lem}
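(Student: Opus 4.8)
The plan is to exploit the fundamental recursion \eqref{jj7}, which gives $\|x_{n+1}-x^*\|^2 \leq \|w_n-x^*\|^2 - \operatorname{dist}^2(w_n,C_n)$, together with the fact (established in Lemma~\ref{nece1}) that $\lim_{n\to\infty}\|x_n-x^*\|$ exists and $\|w_n-x_n\|\to 0$, $\|x_{n+1}-w_n\|\to 0$. First I would combine \eqref{jj7} with the existence of $\lim\|x_n-x^*\|$ and $\|w_n-x_n\|\to 0$ to deduce that $\operatorname{dist}^2(w_n,C_n)\to 0$; indeed $\|w_n-x^*\|^2 \to \lim\|x_n-x^*\|^2$ and $\|x_{n+1}-x^*\|^2 \to$ the same limit, so the squeeze forces $\operatorname{dist}(w_n,C_n)\to 0$. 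Next, I would apply Lemma~\ref{new1} with $h=h_n$ and $K=C_n$: since $h_n$ is affine (because $C$ is affine and $F(y_n)$ is a fixed vector, $h_n(x)=\langle F(y_n),x-y_n\rangle$), it is Lipschitz on $C$ with modulus $\|F(y_n)\|$. By Lemma~\ref{lm28a} and boundedness of $\{w_n\}$ (hence of $\{z_n\}$, hence of $\{y_n\}$ since $\eta_n\in(0,1]$), the quantities $\|F(y_n)\|$ are uniformly bounded, say by some $M>0$. Thus Lemma~\ref{new1} yields
\[
\operatorname{dist}(w_n,C_n) \;\geq\; \frac{1}{M}\,\max\{h_n(w_n),0\} \;=\; \frac{h_n(w_n)}{M},
\]
using $h_n(w_n)\geq 0$ from Lemma~\ref{new2}. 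Combining with Lemma~\ref{new2}'s lower bound $h_n(w_n)\geq \tfrac{\sigma\eta_n}{2}\|w_n-z_n\|^2$ gives
\[
\operatorname{dist}(w_n,C_n) \;\geq\; \frac{\sigma\,\eta_n}{2M}\,\|w_n-z_n\|^2,
\]
and since the left side tends to $0$, part (a) follows immediately: $\eta_n\|w_n-z_n\|^2\to 0$.

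For part (b), the idea is to rule out the possibility that $\|w_n-z_n\|$ stays bounded away from $0$ while $\eta_n\to 0$ along some subsequence. Suppose for contradiction that $\|w_{n_k}-z_{n_k}\|\geq \varepsilon>0$ for some subsequence; by part (a) we must then have $\eta_{n_k}\to 0$, i.e. $m_{n_k}\to\infty$. By the Armijo rule in Step~3, $m_{n_k}-1$ fails \eqref{ee31}, so with $\bar y_k := w_{n_k} - \gamma^{m_{n_k}-1}r(w_{n_k})$ we have
\[
\langle F(\bar y_k),\, r(w_{n_k})\rangle \;<\; \frac{\sigma}{2}\,\|r(w_{n_k})\|^2.
\]
Now $\|\bar y_k - w_{n_k}\| = \gamma^{m_{n_k}-1}\|r(w_{n_k})\| \to 0$ since $\gamma^{m_{n_k}-1}\to 0$ and $\|r(w_{n_k})\| = \|w_{n_k}-z_{n_k}\|$ is bounded. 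Passing to a further subsequence along which $w_{n_k}$ converges weakly (it is bounded), uniform continuity of $F$ on bounded sets gives $\|F(\bar y_k) - F(w_{n_k})\|\to 0$, so the failed inequality becomes, in the limit,
\[
\liminf_{k\to\infty}\Big(\langle F(w_{n_k}),\, r(w_{n_k})\rangle - \frac{\sigma}{2}\|r(w_{n_k})\|^2\Big) \;\leq\; 0.
\]
But \eqref{load2} gives $\langle F(w_{n_k}),r(w_{n_k})\rangle \geq \|r(w_{n_k})\|^2$, so $\langle F(w_{n_k}),r(w_{n_k})\rangle - \tfrac{\sigma}{2}\|r(w_{n_k})\|^2 \geq (1-\tfrac{\sigma}{2})\|r(w_{n_k})\|^2 \geq (1-\tfrac{\sigma}{2})\varepsilon^2 > 0$ since $\sigma\in(0,1)$, a contradiction. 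Hence $\|w_n-z_n\|\to 0$.

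The main obstacle I anticipate is the contradiction argument in part (b): one must be careful that the quantity $\|F(\bar y_k)-F(w_{n_k})\|$ genuinely goes to zero. This needs the uniform continuity of $F$ (not just continuity) applied to the bounded set containing both $\{w_{n_k}\}$ and $\{\bar y_k\}$ — here is precisely where Assumption~\ref{Ass:VI}(b) and Lemma~\ref{lm28a} do the work that a Lipschitz hypothesis would otherwise do, and where the weaker continuity assumption of the paper is exactly strong enough. A secondary technical point is confirming that $\{y_n\}$ (and $\{\bar y_k\}$) are bounded: since $y_n$ lies on the segment between $w_n$ and $z_n$ (as $\eta_n\in(0,1]$) and both endpoints are bounded, this is immediate, but it should be stated so that Lemma~\ref{lm28a} applies to give the uniform bound $M$ on $\|F(y_n)\|$ used in part (a).
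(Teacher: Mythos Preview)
Your proof is correct. Part (a) is essentially the paper's argument: both combine Lemma~\ref{new1} (with Lipschitz constant $\|F(y_n)\|\le M$, bounded via Lemma~\ref{lm28a}), Lemma~\ref{new2}, and $\|x_{n+1}-w_n\|=\operatorname{dist}(w_n,C_n)\to 0$ from Lemma~\ref{nece1}. For part (b) your route is slightly more direct than the paper's. The paper splits into the cases $\liminf_n\eta_n>0$ (trivial from (a)) and $\liminf_n\eta_n=0$; in the second case it rewrites the failed line-search inequality at exponent $m_n-1$ via the polarization identity $2\langle w_n-t_n,w_n-z_n\rangle=\|w_n-z_n\|^2+\|w_n-t_n\|^2-\|z_n-t_n\|^2$ (with $t_n:=w_n-F(w_n)$) and deduces that eventually $\|w_n-t_n\|<\|z_n-t_n\|$, contradicting $z_n=P_C(t_n)$. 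You instead invoke \eqref{load2} directly to obtain $\langle F(w_{n_k}),r(w_{n_k})\rangle\ge\|r(w_{n_k})\|^2$, which together with uniform continuity and the failed Armijo step gives the contradiction in one line. The two arguments are really the same idea---\eqref{load2} is precisely what the paper's polarization computation encodes, and both hinge on $w_n\in C$---but yours avoids the case split and the auxiliary identity. Two minor remarks: the passage to a weakly convergent subsequence in your part (b) is unnecessary, since uniform continuity on bounded sets alone gives $\|F(\bar y_k)-F(w_{n_k})\|\to 0$; and your aside that ``$C$ is affine'' when arguing $h_n$ is Lipschitz is irrelevant, since $h_n(x)=\langle F(y_n),x-y_n\rangle$ is globally affine in $x$ with Lipschitz constant $\|F(y_n)\|$ regardless of $C$.
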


\begin{proof}
Let $x^* \in \text{SOL} $.
Since $F$ is uniformly
continuous on bounded subsets of $X$, then $\{F(x_n)\}, \{z_n\},
\{w_n\}$ and $\{F(y_n)\}$ are bounded. In particular, there exists
$M>0$ such that $\|F(y_n)\|\leq M$ for all $n\in\mathbb{N}$. Combining
Lemma~\ref{new1} and Lemma~\ref{new2}, we get
\begin{eqnarray}\label{jjj7}
    \|x_{n+1}-x^*\|^2&=&\|P_{C_n}(w_n)-x^*\|^2\leq\|w_n-x^*\|^2-\|x_{n+1}-w_n\|^2 \nonumber \\
    &=&\|w_n-x^*\|^2-{\rm dist}^2(w_n,C_n)\nonumber\\
&\leq& \|w_n-x^*\|^2-\Big(\frac{1}{M}h_n(w_n)\Big)^2 \nonumber\\
    &\leq& \|w_n-x^*\|^2-\Big(\frac{1}{2M}\sigma\eta_n\|r(w_n)\|^2\Big)^2\nonumber\\
    &=&\|w_n-x^*\|^2-\Big(\frac{1}{2M}\sigma\eta_n\|w_n-z_n\|^2\Big)^2.
\end{eqnarray}
Since $\{x_n\}$ is bounded, we obtain from \eqref{jjj7} that
\begin{eqnarray}\label{asu13}
\Big(\frac{1}{2M}\sigma\eta_n\|w_n-z_n\|^2\Big)^2 &\leq& \|w_n-x^*\|^2- \|x_{n+1}-x^*\|^2\nonumber \\
&=&\Big(\|w_n-x^*\|- \|x_{n+1}-x^*\|\Big)\Big(\|w_n-x^*\|+ \|x_{n+1}-x^*\|\Big)\nonumber \\
&\leq& \|w_n-x^*\|- \|x_{n+1}-x^*\|M_1 \nonumber \\
&\leq& \|w_n-x_{n+1}\|M_1,
\end{eqnarray}
where $M_1:=\sup_{n\geq 1}\{\|w_n-x^*\|+ \|x_{n+1}-x^*\|\}$. This establishes (a).\\

\noindent To establish (b), We distinguish two cases depending on the behaviour of (the bounded)
sequence of step-sizes $\{\eta_n\}$.

\noindent \textbf{Case 1}: Suppose that $ \liminf_{n \to \infty}
\eta_n > 0 $. Then
$$0\leq \|r(w_n)\|^2=\frac{\eta_{n}\|r(w_n)\|^2}{\eta_n}$$
\noindent and this implies that
$$\begin{aligned}
    \limsup_{n\to\infty}\|r(w_n)\|^2
    &\leq\limsup_{n\to\infty}\bigg(\eta_n\|r(w_n)\|^2\bigg)
                             \bigg(\limsup_{n\to\infty} \frac{1}{\eta_n}\bigg)\\
    &=\bigg(\limsup_{n\to\infty}\eta_n\|r(w_n)\|^2\bigg)\frac{1}{\liminf_{n\to\infty}\eta_n}\\
    &=0.
\end{aligned}$$
Hence, $\limsup_{n \to \infty}\|r(w_n)\|=0$. Therefore,
$$\lim_{n\rightarrow\infty}\|w_n-z_n\|=\lim_{n\rightarrow\infty}\|r(w_n)\|=0.$$

\noindent \textbf{Case 2}: Suppose that $ \liminf_{n \to \infty}
\eta_n= 0 $. Subsequencing if necessary, we may assume without loss of generality that
$\lim_{n\to\infty}\eta_n=0$ and $\lim_{n\rightarrow \infty}\|w_n-z_n\|=a \geq 0$.

\noindent Define
$\bar{y}_n:=\frac{1}{\gamma}\eta_nz_n+\Big(1-\frac{1}{\gamma}\eta_n\Big)w_n$
or, equivalently,
$\bar{y}_n-w_n=\frac{1}{\gamma}\eta_n(z_n-w_n)$.
Since $\{z_n-w_n\}$ is bounded and since $\lim_{n \to
\infty}\eta_n=0$ holds, it follows that
\begin{equation}\label{ify2}
   \lim_{n\to\infty}\|\bar{y}_n-w_n\|=0.
\end{equation}
From the step-size rule and the definition of $\bar{y}_k$, we have
$$\langle F(\bar{y}_n),w_n-z_n\rangle<\frac{\sigma}{2}\|w_n-z_n\|^2,\ \forall n\in\mathbb{N},$$
or equivalently
$$
2\langle F(w_n),w_n-z_n\rangle+
2\langle F(\bar{y}_n)-F(w_n),w_n-z_n\rangle
<\sigma\|w_n-z_n\|^2,\ \forall n\in\mathbb{N}.$$
Setting $t_n:=w_n-F(w_n)$, we obtain form the last inequality that
$$
2\langle w_n-t_n,w_n-z_n\rangle
+
2\langle F(\bar{y}_n)-F(w_n),w_n-z_n\rangle
<\sigma\|w_n-z_n\|^2,\ \forall n\in\mathbb{N}.
$$
Using Lemma~\ref{lm2} (b)  we get
$$
2\langle w_n-t_n,w_n-z_n\rangle=\|w_n-z_n\|^2+\|w_n-t_n\|^2-\|z_n-t_n\|^2.
$$
Therefore,
$$
\|w_n-t_n\|^2-\|z_n-t_n\|^2 < (\sigma-1)\|w_n-z_n\|^2-2\langle F(\bar{y}_n)-F(w_n),w_n-z_n\rangle\ \forall n\in\mathbb{N}.
$$
Since $F$ is uniformly continuous on bounded subsets of $H$ and
\eqref{ify2}, if $a>0$ then the right hand side of the last inequality converges to $(\sigma-1)a<0$ as $n \to \infty$.
From the last inequality we have
$$
\limsup_{n \to \infty} \left( \|w_n-t_n\|^2-\|z_n-t_n\|^2 \right) \leq (\sigma-1)a < 0.
$$
For $\epsilon=-(\sigma-1)a/2 >0$, there
exists $N\in\mathbb{N}$ such that
$$\|w_n-t_n\|^2 - \|z_n-t_n\|^2 \leq (\sigma-1)a+\epsilon= (\sigma-1)a/2 <0  \quad \forall n\in \mathbb{N},n\geq N,$$
leading to
$$\|w_n-t_n\| < \|z_n-t_n\| \quad \forall n\in \mathbb{N},n\geq N, $$
which is a contradiction to the definition of
$z_n=P_C(w_n-F(w_n))$. Hence $a=0$,
which completes the proof.
\end{proof}

\noindent
The boundedness of the sequence $ \{ x_n \} $ implies that there
is at least one weak limit point. We show that such weak limit point belongs to
$SOL$ in the next result.

\begin{lem}\label{Lem:help}
Let Assumptions~\ref{Ass:VI} and \ref{Ass:Parameters} hold.
Furthermore let $\{x_{n_k}\}$ be a subsequence of $ \{x_n\}$
converging weakly to a limit point $ p $. Then $ p \in \text{SOL} $.
\end{lem}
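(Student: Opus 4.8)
The plan is to verify the hypothesis of Minty's lemma (Lemma~\ref{lm27a}): once $p\in C$ is established, it suffices to show $\langle F(x),x-p\rangle\geq 0$ for every $x\in C$. First I would transfer the weak convergence $x_{n_k}\rightharpoonup p$ onto the auxiliary sequences produced by the algorithm. Lemma~\ref{nece1} gives $\|w_n-x_n\|\to 0$, so $w_{n_k}\rightharpoonup p$; Lemma~\ref{nece3}(b) gives $\|w_n-z_n\|=\|r(w_n)\|\to 0$, so $z_{n_k}\rightharpoonup p$ as well. Since $z_n=P_C(w_n-F(w_n))\in C$ and $C$ is closed and convex (hence weakly sequentially closed), it follows that $p\in C$.

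Next, fix an arbitrary $x\in C$. By the projection characterization \eqref{a22} applied to $z_n=P_C(w_n-F(w_n))$,
\[
   \langle w_n-F(w_n)-z_n,\, z_n-x\rangle\geq 0 ,
\]
which rearranges to $\langle F(w_n),\,x-z_n\rangle\geq\langle w_n-z_n,\,x-z_n\rangle$. Monotonicity of $F$ yields $\langle F(x),x-z_n\rangle\geq\langle F(z_n),x-z_n\rangle$; writing $F(z_n)=F(w_n)+\bigl(F(z_n)-F(w_n)\bigr)$ and combining the two inequalities,
\[
   \langle F(x),\,x-z_n\rangle \;\geq\; \langle w_n-z_n,\,x-z_n\rangle + \langle F(z_n)-F(w_n),\,x-z_n\rangle .
\]

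Finally I would let $n=n_k\to\infty$ in this inequality. The left-hand side tends to $\langle F(x),x-p\rangle$ because $x-z_{n_k}\rightharpoonup x-p$ and $F(x)$ is a fixed vector. On the right-hand side, $\{x-z_{n_k}\}$ is bounded (both $x$ and $\{z_n\}$ lie in a common bounded set), $\|w_{n_k}-z_{n_k}\|\to 0$ by Lemma~\ref{nece3}(b), and $\|F(w_{n_k})-F(z_{n_k})\|\to 0$ by uniform continuity of $F$ on bounded subsets (the relevant images being bounded by Lemma~\ref{lm28a} and the estimates in the proof of Lemma~\ref{nece3}); hence both inner products on the right vanish in the limit. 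Therefore $\langle F(x),x-p\rangle\geq 0$, and since $x\in C$ was arbitrary, Lemma~\ref{lm27a} gives $p\in\text{SOL}$.

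The point that needs care is arranging the algebra so that the weak limit survives: the projection inequality and monotonicity are combined precisely so that the only $F$-value requiring a weak limit is $F(x)$ at the \emph{fixed} point $x$, while the residual $F(w_n),F(z_n)$ terms are controlled by the strong convergence $\|w_n-z_n\|\to 0$ together with uniform continuity. This is exactly what lets the argument dispense with the assumption that $F$ is sequentially weakly continuous, a hypothesis invoked in several comparable analyses of monotone variational inequalities.
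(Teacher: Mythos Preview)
Your argument is correct and follows essentially the same route as the paper's: the projection characterization of $z_{n_k}$, monotonicity of $F$, and Lemma~\ref{lm27a}. The only minor differences are that the paper anchors the monotonicity step at $w_{n_k}$ rather than $z_{n_k}$ (so it avoids your extra uniform-continuity term $F(z_n)-F(w_n)$), while you add the explicit verification that $p\in C$, a point the paper leaves implicit.
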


\begin{proof}
By the definition of $z_{n_k}$ together with \eqref{a22}, we have
$$\langle w_{n_k}-F(w_{n_k})-z_{n_k},x-z_{n_k}\rangle\leq 0,\ \forall x\in C,$$
\noindent which implies that
$$\langle w_{n_k}-z_{n_k},x-z_{n_k}\rangle\leq\langle F(w_{n_k}),x-z_{n_k}
  \rangle,\ \forall x \in C.$$
Hence,
\begin{equation}\label{j11}
   \langle w_{n_k}-z_{n_k},x-z_{n_k}\rangle +\langle F(w_{n_k}),z_{n_k}-w_{n_k}\rangle
   \leq \langle F(w_{n_k}),x-w_{n_k}\rangle,\ \forall x\in C.
\end{equation}
Fix $x \in C$ and let $k\rightarrow \infty$ in \eqref{j11}.
Since
$\lim_{k \to \infty} \|w_{n_k}-z_{n_k}\|=0 $, we have
\begin{equation}\label{anbi}
    0\leq \liminf_{k \to \infty} \langle F(w_{n_k}),x-w_{n_k}\rangle
\end{equation}
for all $ x \in C $.
It follows from \eqref{j11} and the monotonicity of $F$ that
 \begin{eqnarray*}
 \langle w_{n_k}-z_{n_k},x-z_{n_k}\rangle +\langle F(w_{n_k}),z_{n_k}-w_{n_k}\rangle
    &\leq& \langle F(w_{n_k}),x-w_{n_k}\rangle\\
    &\leq& \langle F(x),x-w_{n_k}\rangle  \quad \forall x\in C.
 \end{eqnarray*}
Letting $k \to +\infty$ in the last inequality, remembering that  $\lim_{k \to \infty}\|w_{n_k}-z_{n_k}\|=0$  for all $k$, we have
$$
\langle F(x),x-p\rangle \geq 0 \quad  \forall x\in C.
$$
\noindent
In view of Lemma~\ref{lm27a}, this implies $p\in\text{SOL}$.
\end{proof}

\noindent All is now set to give the weak convergence result in the theorem below.

\begin{thm}\label{t32}
Let Assumptions~\ref{Ass:VI} and \ref{Ass:Parameters} hold.
Then the sequence $ \{x_n\} $ generated by Algorithm~\ref{Alg:AlgL}
weakly converges to a point in $\text{SOL} $.
\end{thm}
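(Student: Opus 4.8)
The plan is to deduce the theorem from the Opial-type criterion recorded in Lemma~\ref{super}, applied with the set ``$C$'' there taken to be $\text{SOL}$ (nonempty by Assumption~\ref{Ass:VI}(c)). Two conditions must be checked: that $\lim_{n\to\infty}\|x_n-x^*\|$ exists for every $x^*\in\text{SOL}$, and that every weak sequential cluster point of $\{x_n\}$ lies in $\text{SOL}$.

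For the first condition I would simply observe that it is already contained in the proof of Lemma~\ref{nece1}. Fixing an arbitrary $x^*\in\text{SOL}$ and setting $\varphi_n:=\|x_n-x^*\|^2$, the recursive estimate \eqref{asu7}, together with the bound \eqref{asu9} on $\lambda_n$ and the summability \eqref{asu15} of $\sum\|x_{k+1}-x_k\|^2$, put the sequences $\{\varphi_n\}$ and $\{\alpha_n\}$ (with an appropriate summable $\{\delta_n\}$) in the setting of Lemma~\ref{la5}; conclusion (ii) of that lemma yields that $\varphi_n$, and hence $\|x_n-x^*\|$, converges. Since $x^*$ was an arbitrary element of $\text{SOL}$, condition (i) of Lemma~\ref{super} holds.

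For the second condition I would use boundedness of $\{x_n\}$ (Lemma~\ref{nece1}) to guarantee the existence of a weak cluster point. Let $p$ be any such point and choose a subsequence $\{x_{n_k}\}$ with $x_{n_k}\rightharpoonup p$. Since $\|w_n-x_n\|\to 0$ (established in Lemma~\ref{nece1}), we also have $w_{n_k}\rightharpoonup p$. Then Lemma~\ref{Lem:help} applies to this subsequence: its proof uses only $w_{n_k}\rightharpoonup p$, the projection characterization \eqref{a22} of $z_{n_k}=P_C(w_{n_k}-F(w_{n_k}))$, monotonicity of $F$, and $\|w_{n_k}-z_{n_k}\|\to 0$ — and this last fact is Lemma~\ref{nece3}(b), which holds along every subsequence. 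Hence $p\in\text{SOL}$, so every weak cluster point of $\{x_n\}$ lies in $\text{SOL}$, giving condition (ii).

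Both hypotheses of Lemma~\ref{super} being verified, it follows that $\{x_n\}$ converges weakly to a point of $\text{SOL}$, which is the assertion. I do not expect a real obstacle at this stage: the analytic substance has been done in Lemmas~\ref{nece1}, \ref{nece3}, and \ref{Lem:help}, and the only points requiring mild care are the bookkeeping that lets Lemma~\ref{Lem:help} be read along an arbitrary weakly convergent subsequence, and extracting the ``$\lim\|x_n-x^*\|$ exists'' statement uniformly for all $x^*\in\text{SOL}$ rather than for a single fixed solution.
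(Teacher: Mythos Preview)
Your proposal is correct and mirrors the paper's own proof almost exactly: the paper also proves Theorem~\ref{t32} by invoking Lemma~\ref{super} with the two conditions (existence of $\lim_n\|x_n-x^*\|$ for $x^*\in\text{SOL}$, and $\omega_w(x_n)\subset\text{SOL}$) that were established in Lemmas~\ref{nece1} and~\ref{Lem:help}. Your write-up is in fact more explicit than the paper's, which simply cites those two facts and applies Lemma~\ref{super} directly.
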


\begin{proof}
We have shown that \\
(i) $\lim_{n \to \infty} \|x_n-x^*\|$ exists;\\
(ii) $\omega_w(x_n) \subset \text{SOL}$, where
$\omega_w(x_n):=\{x:\exists x_{n_j}\rightharpoonup x\}$ denotes the weak $\omega$-limit set of $\{x_n\}$.\\
Then, by Lemma \ref{super}, we have that $\{x_n\}$ converges weakly to a point in $\text{SOL} $.
\end{proof}

\noindent
We give some discussions on further contributions of this paper in the remark below.

\begin{rem}
\noindent
(a) Our iterative Algorithm~~\ref{Alg:AlgL} is more applicable than some recent results on projection type methods with inertial extrapolation step for solving VI($F,C$) \eqref{bami2} in real Hilbert spaces. For instance, the proposed method in \cite{Dong} can only be applied for a case when $F$ is monotone and $L$-Lipschitz continuous. Moreover, the Lipschitz constant or an estimate of it has to be known when implementing the Algorithm 3.1 of \cite{Dong}. In this result, Algorithm~~\ref{Alg:AlgL} is applicable when $F$ is uniformly continuous and monotone operator.\\[-1mm]

(b) In finite-dimensional spaces, the assumption that $ F $ is
uniformly continuous on bounded subsets of $C$ automatically holds when $F$ is continuous.
Moreover, in this case, only continuity of $ F $ is required and our weak convergence in Theorem \ref{t32}
coincides with global convergence of sequence of iterates $\{x_n\}$ in $\mathbb{R}^n$. \\[-1mm]

(c) Lemmas 3.5, 4.1, 4.2 and Theorem 4.4 still hold for a more general case of $F$ pseudo-monotone (i.e., for all $x,y \in H$,
$\langle F(x),y-x\rangle\geq0\Longrightarrow\langle F(y),y-x\rangle\geq0;$).
We give a version of Lemma 4.3 for the case of $F$ pseudo-monotone in the Appendix.
\hfill $\Diamond$
\end{rem}

\section{Rate of Convergence}\label{Sec:Rate}
\noindent In this section we give the rate of convergence of the iterative method \ref{Alg:AlgL} proposed in Section \ref{Sec:Method}. We show that the proposed method has sublinear rate of convergence and establish the nonasymptotic
$O(1/n)$ convergence rate of the proposed method. To the best of our knowledge, there is no convergence rate result known
in the literature without stronger assumptions for inertial projection-type Algorithm \ref{Alg:AlgL} for VI$(F,C)$ \eqref{bami2} in infinite dimensional Hilbert spaces.

\begin{thm}\label{t33}
Let Assumptions~\ref{Ass:VI} and \ref{Ass:Parameters} hold.
Let the sequence $ \{x_n\} $ be generated by Algorithm~\ref{Alg:AlgL} and $x_0=x_1$. Then for any $x^* \in \text{SOL} $
and for any positive integer $n$, it holds that
$$
\underset{1 \leq i \leq n}\min \|x_{i+1}-w_i\|^2 \leq \frac{\Big[1+\Big(\frac{\alpha}{1-\alpha}+\frac{\alpha(1-\alpha)}{\beta} \Big)\Big(1+ \frac{1-\alpha_0}{1-\alpha}\Big) \Big]\|x_0-x^*\|^2}{n}.
$$
\end{thm}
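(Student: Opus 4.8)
The plan is to exploit the telescoping inequality \eqref{asu12}, namely $\varepsilon_{n+1}-\varepsilon_n \leq -\beta\|x_{n+1}-x_n\|^2$, together with the descent estimate \eqref{jj7} which gives $\|x_{i+1}-w_i\|^2 = {\rm dist}^2(w_i,C_i) \leq \|w_i-x^*\|^2-\|x_{i+1}-x^*\|^2$. First I would recall from the proof of Lemma~\ref{nece1} that with $\varphi_n=\|x_n-x^*\|^2$ and $\varepsilon_n=\varphi_n-\alpha_n\varphi_{n-1}+\lambda_n\|x_n-x_{n-1}\|^2$, summing \eqref{asu12} from $i=1$ to $n$ yields $\beta\sum_{i=1}^n\|x_{i+1}-x_i\|^2 \leq \varepsilon_1-\varepsilon_{n+1} \leq \varepsilon_1 + \alpha\varphi_n$. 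Since $x_0=x_1$, we have $\varepsilon_1 = \varphi_1 - \alpha_1\varphi_0 = (1-\alpha_1)\varphi_0 \leq \varphi_0 = \|x_0-x^*\|^2$; in fact I would keep the sharper constant $(1-\alpha_1)\le(1-\alpha_0)$ so as to match the factor $\tfrac{1-\alpha_0}{1-\alpha}$ appearing in the claimed bound. Combined with \eqref{asu14}, which gives $\varphi_n \leq \varphi_0 + \tfrac{\varepsilon_1}{1-\alpha}$, this controls $\sum\|x_{i+1}-x_i\|^2$ by a constant multiple of $\|x_0-x^*\|^2$.

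Next I would sum the descent inequality. From \eqref{jj7}, $\sum_{i=1}^n \|x_{i+1}-w_i\|^2 \leq \sum_{i=1}^n\big(\|w_i-x^*\|^2-\|x_{i+1}-x^*\|^2\big)$. The right-hand side is not immediately telescoping because of the inertial term relating $w_i$ to $x_i, x_{i-1}$; I would substitute $\|w_i-x^*\|^2 = \|x_i-x^*\|^2 + 2\alpha_i\langle x_i-x^*, x_i-x_{i-1}\rangle + \alpha_i^2\|x_i-x_{i-1}\|^2$ (Lemma~\ref{lm2}(a)), bound the cross term by $\tfrac{\alpha}{1-\alpha}\|x_i-x^*\|^2$-type quantities or rather pull everything back to a genuine telescope $\sum(\varphi_i-\varphi_{i+1}) = \varphi_1-\varphi_{n+1} \leq \varphi_0$ plus a remainder involving $\sum\|x_i-x_{i-1}\|^2$. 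Plugging in the bound on $\sum\|x_i-x_{i-1}\|^2$ from the previous paragraph, everything collapses to $\sum_{i=1}^n\|x_{i+1}-w_i\|^2 \leq \big[1 + (\tfrac{\alpha}{1-\alpha}+\tfrac{\alpha(1-\alpha)}{\beta})(1+\tfrac{1-\alpha_0}{1-\alpha})\big]\|x_0-x^*\|^2$. Finally, since $\min_{1\le i\le n}\|x_{i+1}-w_i\|^2 \leq \tfrac1n\sum_{i=1}^n\|x_{i+1}-w_i\|^2$, dividing by $n$ gives the stated $O(1/n)$ rate.

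The main obstacle I anticipate is bookkeeping the constants so that the final coefficient comes out exactly as $1+\big(\tfrac{\alpha}{1-\alpha}+\tfrac{\alpha(1-\alpha)}{\beta}\big)\big(1+\tfrac{1-\alpha_0}{1-\alpha}\big)$ rather than some cruder multiple: this requires being careful to use $\alpha_i \leq \alpha$ only where it tightens, to track that the $\tfrac{\alpha}{1-\alpha}$ piece comes from the geometric part of \eqref{asu14} applied to $\varphi_n$ and the $\tfrac{\alpha(1-\alpha)}{\beta}$ piece from converting $\sum\|x_i-x_{i-1}\|^2$ (bounded via $\beta^{-1}$) back into a coefficient on $\|x_0-x^*\|^2$, and to combine these correctly with the leading $1$ coming from the honest telescope $\varphi_1-\varphi_{n+1}$. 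The inequalities themselves are all already available in Section~\ref{Sec:Convergence}; the work is purely in assembling them with the right weights and in exploiting $x_0=x_1$ to kill the initial inertial term.
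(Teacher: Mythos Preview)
Your plan is correct and mirrors the paper's proof: the paper also sums the inequality $\|x_{i+1}-w_i\|^2 \le \varphi_i-\varphi_{i+1}+\alpha_i(\varphi_i-\varphi_{i-1})+\alpha_i(1+\alpha_i)\|x_i-x_{i-1}\|^2$ (your \eqref{jj7} plus the expansion of $\|w_i-x^*\|^2$), bounds $\sum\|x_i-x_{i-1}\|^2$ via \eqref{addi}, and finishes by dividing by $n$. The one step you leave vague---handling the non-telescoping term $\sum_i\alpha_i(\varphi_i-\varphi_{i-1})$---is carried out in the paper via the positive-part recursion $[V_{i+1}]_+\le\alpha[V_i]_++\delta_i$, giving $\sum_i[V_i]_+\le\tfrac{1}{1-\alpha}\sum_i\delta_i$, which is precisely what produces the $\tfrac{\alpha}{1-\alpha}$ factor you anticipated.
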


\begin{proof}
From \eqref{asu5}, we have
\begin{eqnarray}\label{asusu1}
&&\|x_{n+1}-x^*\|^2-\|x_n-x^*\|^2-\alpha_n(\|x_n-x^*\|^2-\|x_{n-1}-x^*\|^2)\nonumber \\
&\leq& \alpha_n(1+\alpha_n)\|x_n-x_{n-1}\|^2-\|x_{n+1}-w_n\|^2.
\end{eqnarray}
This implies that
\begin{eqnarray}\label{asusu2}
\|x_{n+1}-w_n\|^2&\leq& \varphi_n-\varphi_{n+1}+\alpha_n(\varphi_n-\varphi_{n-1})+\delta_n\nonumber \\
&\leq& \varphi_n-\varphi_{n+1}+\alpha[V_n]_{+}+\delta_n,
\end{eqnarray}
where $\delta_n:=\alpha_n(1+\alpha_n)\|x_n-x_{n-1}\|^2$,
$V_n:=\varphi_n-\varphi_{n-1}$, $[V_n]_{+}:=\max \{V_n,0\}$ and
$\varphi_n:=\|x_n-x^*\|^2$.\\

\noindent Observe from \eqref{addi} that
$$
\sum_{n=1}^{\infty}\|x_{n+1}-x_n\|^2 \leq \frac{1}{\beta}\Big[\varphi_0+\frac{\varepsilon_1}{1-\alpha}\Big].
$$
So,
\begin{eqnarray}\label{asusu3}
\sum_{n=1}^{\infty}\delta_n&=& \sum_{n=1}^{\infty} \alpha_n(1+\alpha_n)\|x_n-x_{n-1}\|^2\nonumber\\
&\leq& \sum_{n=1}^{\infty} \alpha(1+\alpha)\|x_n-x_{n-1}\|^2\nonumber\\
&=& \alpha(1+\alpha)\sum_{n=1}^{\infty} \|x_n-x_{n-1}\|^2\nonumber\\
&\leq& \frac{\alpha(1+\alpha)}{\beta}\Big[\varphi_0+\frac{\varepsilon_1}{1-\alpha}\Big]:=C_1.
\end{eqnarray}
The inequality \eqref{asusu1} implies that
\begin{eqnarray*}
V_{n+1}&\leq &\alpha_n V_n+\delta_n\\
&\leq& \alpha[V_n]_{+}+\delta_n.
\end{eqnarray*}
Therefore,
\begin{eqnarray}\label{asusu4}
[V_{n+1}]_{+} &\leq& \alpha [V_n]_{+} +\delta_n\nonumber \\
&\leq& \alpha^n [V_1]_{+}+ \sum_{j=1}^{n}\alpha^{j-1} \delta_{n+1-j}.
\end{eqnarray}
Note that by our assumption $x_0=x_1$. This implies that $V_1=[V_1]_{+}=0$ and $\delta_1=0$.
From \eqref{asusu4}, we get
\begin{eqnarray}\label{asusu5}
\sum_{n=2}^{\infty}[V_n]_{+} &\leq& \frac{1}{1-\alpha} \sum_{n=1}^{\infty}\delta_n\nonumber \\
&=& \frac{1}{1-\alpha} \sum_{n=2}^{\infty}\delta_n.
\end{eqnarray}
From \eqref{asusu2}, we get
\begin{eqnarray}\label{asusu6}
\sum_{i=1}^n\|x_{i+1}-w_i\|^2 &\leq& \varphi_1-\varphi_n+\alpha \sum_{i=1}^n[V_i]_{+}
+ \sum_{i=2}^n\delta_i\nonumber \\
&\leq& \varphi_1+\alpha C_2+C_1,
\end{eqnarray}
where $C_2=\frac{C_1}{1-\alpha} \geq \frac{1}{1-\alpha}\sum_{i=2}^{\infty}\delta_i \geq \sum_{i=2}^{\infty}[V_i]_{+}$ by
\eqref{asusu5}. Now, since $\varepsilon_1=\varphi_1-\alpha_1\varphi_0=(1-\alpha_1)\varphi_1$, we have
\begin{eqnarray}\label{asusu7}
\varphi_1+\alpha C_2+C_1&=& \varphi_0+\frac{\alpha C_1}{1-\alpha}\nonumber \\
&&+\frac{\alpha(1+\alpha)}{\beta}\Big[\varphi_0+\frac{\varepsilon_1}{1-\alpha}\Big]\nonumber \\
&=& \varphi_0+\frac{\alpha C_1}{1-\alpha}\nonumber \\
&&+\frac{\alpha(1+\alpha)}{\beta}\Big[1+\frac{1-\alpha_0}{1-\alpha}\Big]\varphi_0\nonumber \\
&=& \varphi_0+\frac{\alpha }{1-\alpha}\Big[1+\frac{1-\alpha_0}{1-\alpha} \Big]\nonumber \\
&&+\frac{\alpha(1+\alpha)}{\beta}\Big[1+\frac{1-\alpha_0}{1-\alpha}\Big]\varphi_0\nonumber \\
&=& \Big[1+\Big(\frac{\alpha}{1-\alpha}+\frac{\alpha(1-\alpha)}{\beta} \Big)\Big(1+ \frac{1-\alpha_0}{1-\alpha}\Big) \Big]\varphi_0.
\end{eqnarray}
From \eqref{asusu6} and \eqref{asusu7}, we obtain
\begin{eqnarray}\label{asusu8}
\underset{1 \leq i \leq n}\min \|x_{i+1}-w_i\|^2 \leq \frac{\Big[1+\Big(\frac{\alpha}{1-\alpha}+\frac{\alpha(1-\alpha)}{\beta} \Big)\Big(1+ \frac{1-\alpha_0}{1-\alpha}\Big) \Big]\|x_0-x^*\|^2}{n}.
\end{eqnarray}
\end{proof}

\begin{rem}\label{Rem:DiscussionConvergence}
\noindent
(a) Note that $x_{n+1}=w_n$ implies that $w_n \in C_n$, where $C_n$ is as defined in Algorithm \ref{Alg:AlgL} and hence
$h_n(w_n)\leq 0$. By Lemma \ref{new2}, we get $\frac{\sigma\eta_n}{2}\|w_n-z_n\|^2 \leq h_n(w_n)$. Therefore,
$$
0 \leq \frac{\sigma\eta_n}{2}\|w_n-z_n\|^2 \leq h_n(w_n) \leq 0,
$$
\noindent which implies that $w_n=z_n$. Thus,
the equality $x_{n+1}=w_n$  implies that $x_{n+1}$ is already a solution of VI$(F,C)$ \eqref{bami2}. In this sense, the error estimate given in Theorem \ref{t33} can be viewed as a convergence rate result of the inertial projection-type method \ref{Alg:AlgL}. In particular, \eqref{asusu8} implies that, to obtain an $\epsilon$-optimal solution in the
sense that $\|x_{n+1}-w_n\|^2<\epsilon$, the upper bound of iterations required by inertial projection-type method \ref{Alg:AlgL} is $\frac{\Big[1+\Big(\frac{\alpha}{1-\alpha}+\frac{\alpha(1-\alpha)}{\beta} \Big)\Big(1+ \frac{1-\alpha_0}{1-\alpha}\Big) \Big]\|x_0-x^*\|^2}{\epsilon}$.
We note that with the " $\min_{1\leq i \leq  n}$", a nonasymptotic $O(1/n)$ convergence rate implies that an
$\epsilon$-accuracy solution, in the sense that $\|x_{n+1}-w_n\|^2<\epsilon$, is obtainable within no more than
$O(1/\epsilon)$ iterations. Furthermore, if $\alpha_n = 0$ for all $n$, then the "$\min_{1\leq i \leq  n}$" can be removed by setting $i =n$ in Theorem \ref{t33}.\hfill $\Diamond$
\end{rem}

\section{Numerical Experiments}\label{Sec:Numerics}

In this section, we discuss the numerical behaviour of Algorithm~\ref{Alg:AlgL} using different test examples taken from the literature which are describe below and compare our method with \eqref{pppp3}, \eqref{mm2} and the original Algorithm (when $\alpha_n=0$) of Algorithm \ref{Alg:AlgL}.\\

\begin{exm}
This first example (also considered in \cite{MaingePE1, Malitsky3}) is a classical example for which the usual
gradient method does not converge. It is related to the unconstrained case of VI($F,C$) \eqref{bami2}
where the feasible set is $C:=\mathbb{R}^m$ (for some positive even integer $m$) and
$F:=(a_{ij})_{1\leq i,j\leq m}$ is the square matrix $m \times m$ whose terms are given by

\begin{eqnarray*}
a_{ij}=
\left\{  \begin{array}{llll}
         & -1,~~{\rm if} ~~j=m+1-i~~{\rm and}~~j>i\\
         & 1,~~{\rm if} ~~j=m+1-i~~{\rm and}~~j<i\\
         & 0~~{\rm otherwise}
         \end{array}
         \right.
\end{eqnarray*}
The zero vector $z = (0,\ldots,0)$ is the solution of this test example.
\end{exm}
\noindent
The initial point $x_0$ is the unit vector. We choose $\gamma=0.1$, $\sigma=0.8$ and $\alpha_n=0.6$, $m=500$.
\begin{figure}[!h]
%\begin{flushleft}
\includegraphics[width=13.2cm,height=8.8cm]{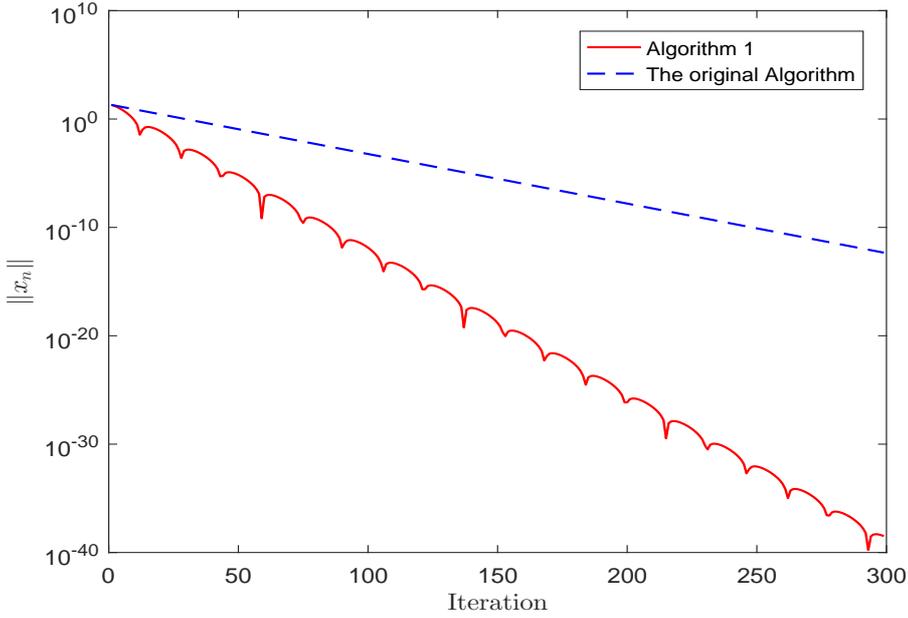}
\caption{Comparison of Algorithm 1 with the original Algorithm.
}
%\end{flushleft}
\end{figure}

\noindent The numerical result is listed in Figure 1, which illustrates that Algorithm 1 highly improves the
original Algorithm.

\begin{exm}\label{her2}
This example is taken from \cite{Harker} and has been
considered by many authors for numerical experiments (see, for example,
\cite{Hieu, Malitsky, Solodov}). The operator $ A $ is defined by
$ A(x) := Mx+q $,
where $ M= BB^T+ S + D $, where $ B, S, D \in
\mathbb R^{m \times m} $ are randomly generated matrices such that $ S $ is
skew-symmetric (hence the operator does not arise from an optimization problem),
$ D $ is a positive definite diagonal matrix (hence the variational inequality
has a unique solution) and $ q = 0 $. The feasible set $ C $ is described by
linear inequality constraints $ B x \leq b $ for some random matrix
$ B \in \mathbb R^{k \times m} $ and a random vector $ b \in \mathbb R^k $
with nonnegative entries. Hence the zero vector is feasible and therefore
the unique solution of the corresponding variational inequality. These projections are computed
by solving a quadratic optimization problem using the MATLAB solver
{\tt quadprog}. Hence, for this class of problems, the evaluation of
$ A $ is relatively inexpensive, whereas projections are costly. We
present the corresponding numerical results (number of iterations and
CPU times in seconds) using four different dimensions $ m $ and two different
numbers of inequality constraints $ k $.
\end{exm}

\noindent
We compare our proposed Algorithm \ref{Alg:AlgL}, original Algorithm, subgradient extragradient method \eqref{pppp3} and the inertial subgradient extragradient method \eqref{mm2} using Example \ref{her2} and the numerical results are listed in Tables 1 -4 and shown in Figures 2-5 below.  We take the initial point $x_0$ to be the unit vector in these algorithms. We use ``OPM" to denote the original Algorithm, ``SPM" to denote the subgradient extragradient method \eqref{pppp3} and ``iSPM" to denote inertial subgradient extragradient method \eqref{mm2}.\\

\noindent We choose the stopping criterion as $\|x^k\|\leq \epsilon = 0.001.$  The size $k =30, 50, 80$ and $m =  20,  50,  80, 100$.   The matrices $B,S,D$ and the vector $b$ are generated randomly. We choose  $\gamma=0.1$, $\sigma=0.8$ and $\alpha_n=0.1$ in Algorithm \eqref{Alg:AlgL}. In \eqref{pppp3}, we choose $\sigma=0.8$, $\rho=0.1$, $\mu=0.2$. In iSPM \eqref{mm2}, $\alpha=0.2, L=\|M\|, \tau=0.5\frac{\frac{1}{2}-2\alpha-\frac{1}{2}\alpha^2}{\frac{1}{2}-\alpha+\frac{1}{2}\alpha^2}$.

\noindent We denote by ``Iter." the number of iterations and ``InIt." the number of total iterations of finding suitable step size in Tables 1 -4 below.

\begin{table}[H]\caption{Comparison of Algorithm 1, original Algorithm and methods \eqref{pppp3} and \eqref{mm2} for $k=50, \alpha_n=0.1.$}\label{table1}
{\tiny
\begin{center}
\begin{tabular}{l|c c c c c c c c c c c c c c }
\hline
&&\multicolumn{1}{c}{Iter. }&&&&&\multicolumn{2}{c}{InIt.}&&&&\multicolumn{2}{c}{CPU in second}&\\
\cline{2-5} \cline{7-9}\cline{11-14}
 $m$ & Alg.1  & OPM &SPM &iSPM && Alg.1  & OPM &SPM && Alg.1  & OPM &SPM &iSPM\\
 \hline
 \hline
  20  &1044&1162&2891&4783&&2376&5022&4283&&0.0781&0.2188&0.6563&0.5781\\
 50   &4829&5912&25544&29639&&13809&30885&123982&&0.5625&0.6094&7.5938&2.1875\\
 80  &19803&22129&19736&61322&&74066&157061&93047&&6.6094&7.0313&15.9063&13.4844\\
 100  &26821&31149&35520&92579&&101925&220297&173670&&12.1406&15.8438&67.4219&34.6250\\
\hline
\hline
\end{tabular}
\end{center}
}
\end{table}

\begin{table}[H]
{\tiny
\caption{Comparison of Algorithm 1, the original Algorithm and methods \eqref{pppp3} and \eqref{mm2} for $k=80, \alpha_n=0.1.$}\label{table2}
\begin{center}
\begin{tabular}{l|c c c c c c c c c c c c c c c }
\hline
&&\multicolumn{1}{c}{Iter. }&&&&\multicolumn{2}{c}{InIt.}&&&&\multicolumn{2}{c}{CPU in second}&\\
\cline{2-5} \cline{7-9}\cline{11-14}
 $m$ & Alg.1  & OPM &SPM &iSPM && Alg.1  & OPM &SPM && Alg.1  & OPM &SPM &iSPM\\
 \hline
 \hline
 20   &1479&1651&5096&5306&&3676&7783&19071&&0.1875&0.2188&6.7869&0.7500\\
 50  &4152&5088&8144&28033&&11955&26594&36342&&1.0625&1.2344&8.2344&6.2813\\
 80  &22711&25864&22281&64588&&85176&182177&105237&&8.2188&9.3438&20.6719&16.1563\\
 100  &26314&30568&37588&88138&&99998&216162&185430&&13.0625&16.1250&118.4375&36.0625\\
\hline
\hline
\end{tabular}
\end{center}
}
\end{table}

\begin{table}[H]
{\tiny
\caption{Comparison of Algorithm 1, the original Algorithm and methods \eqref{pppp3} and \eqref{mm2} for $k=30, \alpha_n=0.6.$}\label{table4}
\begin{center}
\begin{tabular}{l|c c c c c c c c c c c c c  c }
\hline
&&\multicolumn{1}{c}{Iter. }&&&&\multicolumn{2}{c}{InIt.}&&&&\multicolumn{2}{c}{CPU in second}&\\
\cline{2-5} \cline{7-9}\cline{11-14}
 $m$ & Alg.1  & OPM &SPM &iSPM && Alg.1  & OPM &SPM && Alg.1  & OPM &SPM &iSPM\\
 \hline
 \hline
  10  &197&469&509&1327&&341&1370&1460&&1.21884&0.0313&0.2969&0.2500\\
 30   &1548&4745&4347&13697&&4243&17534&16880&&10.8750&0.4063&1.0938&0.8750\\
 50  &1581&4899&8269&26393&&4762&18859&37237&&12.9688&0.5000&2.4688&1.6094\\
 70  &6192&6256&7826&56491&&22381&81445&31406&&62.4844&5.5781&9.3281&9.1813\\
\hline
\hline
\end{tabular}
\end{center}
}
\end{table}

\begin{table}[H]
{\tiny
\caption{Comparison of Algorithm 1, the original Algorithm and methods \eqref{pppp3} and \eqref{mm2} for $k=50, \alpha_n=0.6.$}\label{table4}
\begin{center}
\begin{tabular}{l|c c c c c c c c c c c c c c  }
\hline
&&\multicolumn{1}{c}{Iter. }&&&&\multicolumn{2}{c}{InIt.}&&&&\multicolumn{2}{c}{CPU in second}&\\
\cline{2-5} \cline{7-9}\cline{11-14}
 $m$ & Alg.1  & OPM &SPM &iSPM && Alg.1  & OPM &SPM && Alg.1  & OPM &SPM &iSPM\\
 \hline
 \hline
 10   &147&419&579&1117&&253&1024&1119&&3&0.0313&0.5781&0.3594\\
 30  &1715&5110&6373&13934&&4705&19018&25006&&30.0469&0.4844&1.5469&1.1563\\
 50  &1308&4798&9227&31585&&4062&17873&41084&&41.4375&0.4844&3.5156&2.0469\\
 70  &5673&14944&8205&52124&&20548&74974&33716&&393&4.5469&7.6406&10.4375\\
\hline
\hline
\end{tabular}
\end{center}
}
\end{table}

\begin{figure}[!h]
%\begin{flushleft}
\includegraphics[width=13.2cm,height=8.8cm]{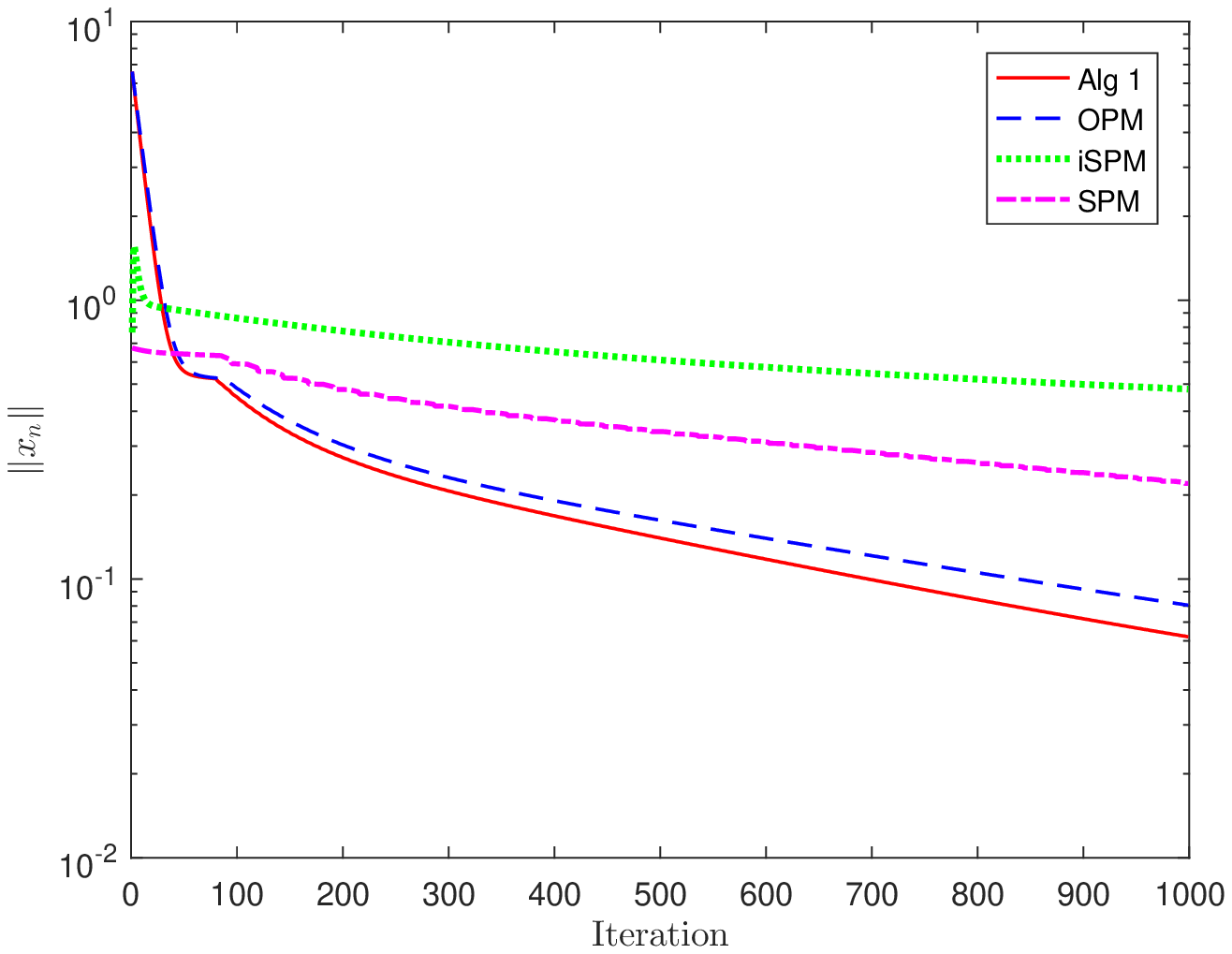}
\caption{Comparison of Algorithm 1, original Algorithm and methods \eqref{pppp3} and \eqref{mm2}. $k=50, \alpha_n=0.1.$
}
%\end{flushleft}
\end{figure}

\begin{figure}[!h]
%\begin{flushleft}
\includegraphics[width=13.2cm,height=8.8cm]{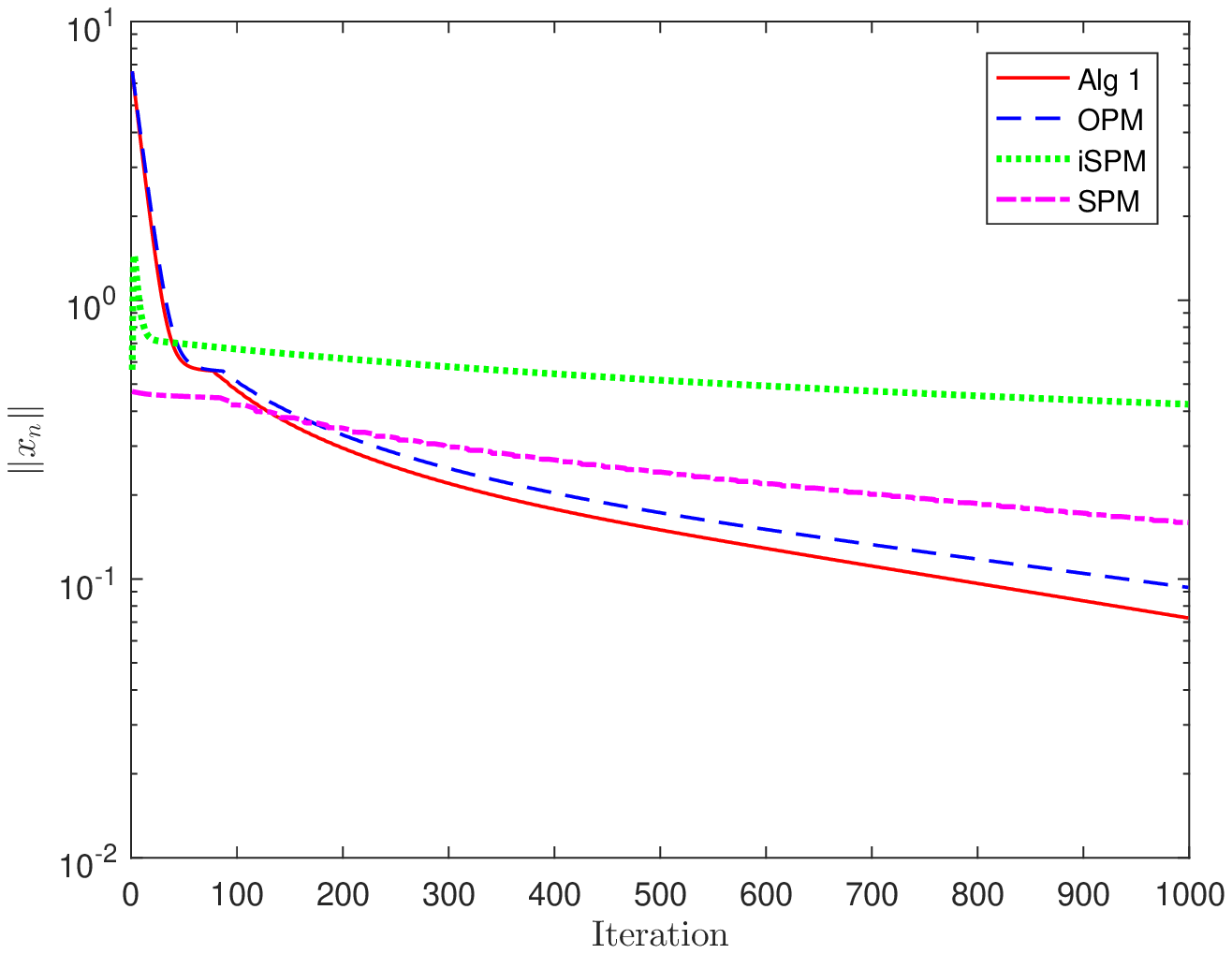}
\caption{Comparison of Algorithm 1, original Algorithm and methods \eqref{pppp3} and \eqref{mm2}. $k=80, \alpha_n=0.1.$
}
%\end{flushleft}
\end{figure}

\begin{figure}[!h]
%\begin{flushleft}
\includegraphics[width=13.2cm,height=8.8cm]{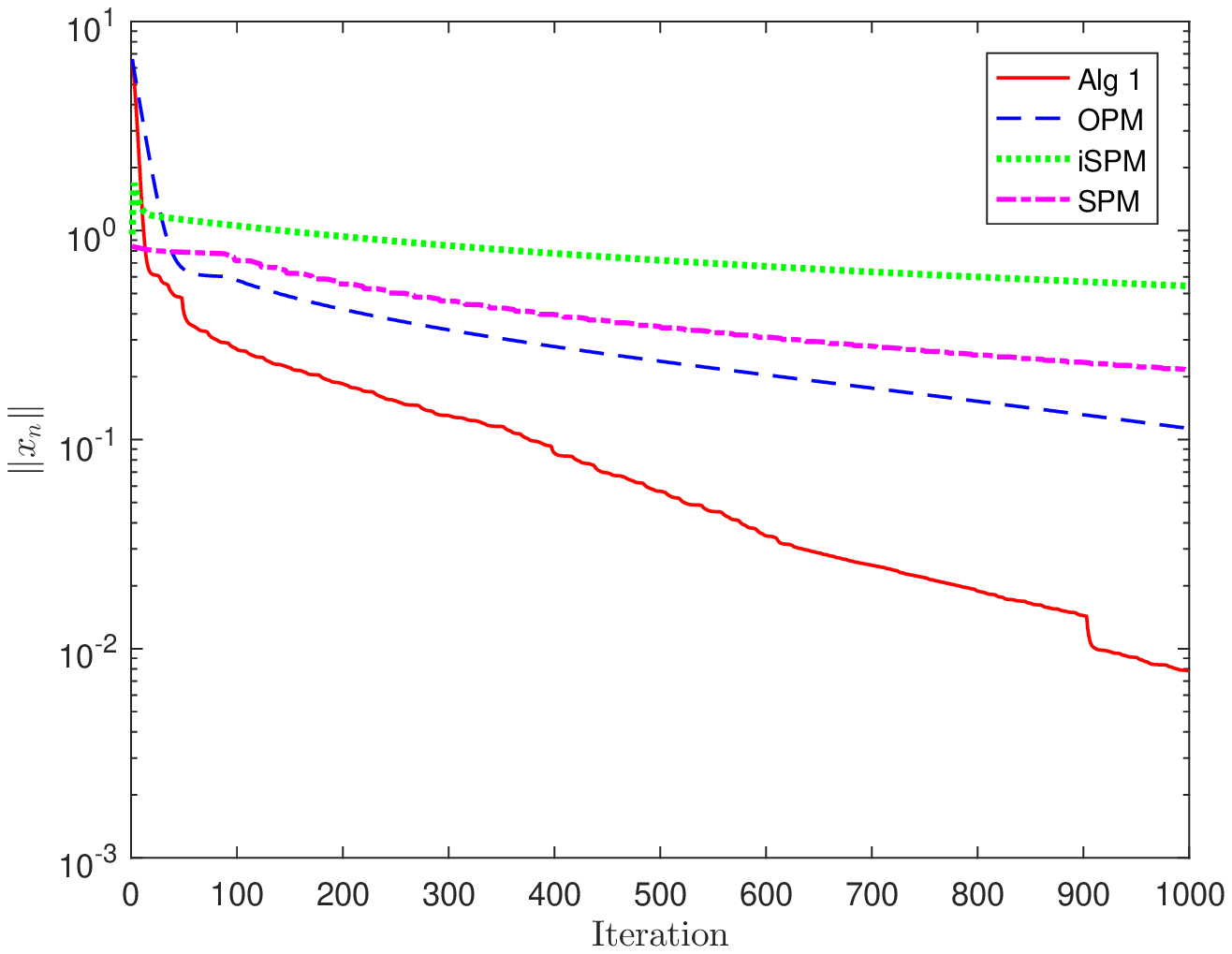}
\caption{Comparison of Algorithm 1, original Algorithm and methods \eqref{pppp3} and \eqref{mm2}. $k=30, \alpha_n=0.6.$
}
%\end{flushleft}
\end{figure}

\begin{figure}[!h]
%\begin{flushleft}
\includegraphics[width=13.2cm,height=8.8cm]{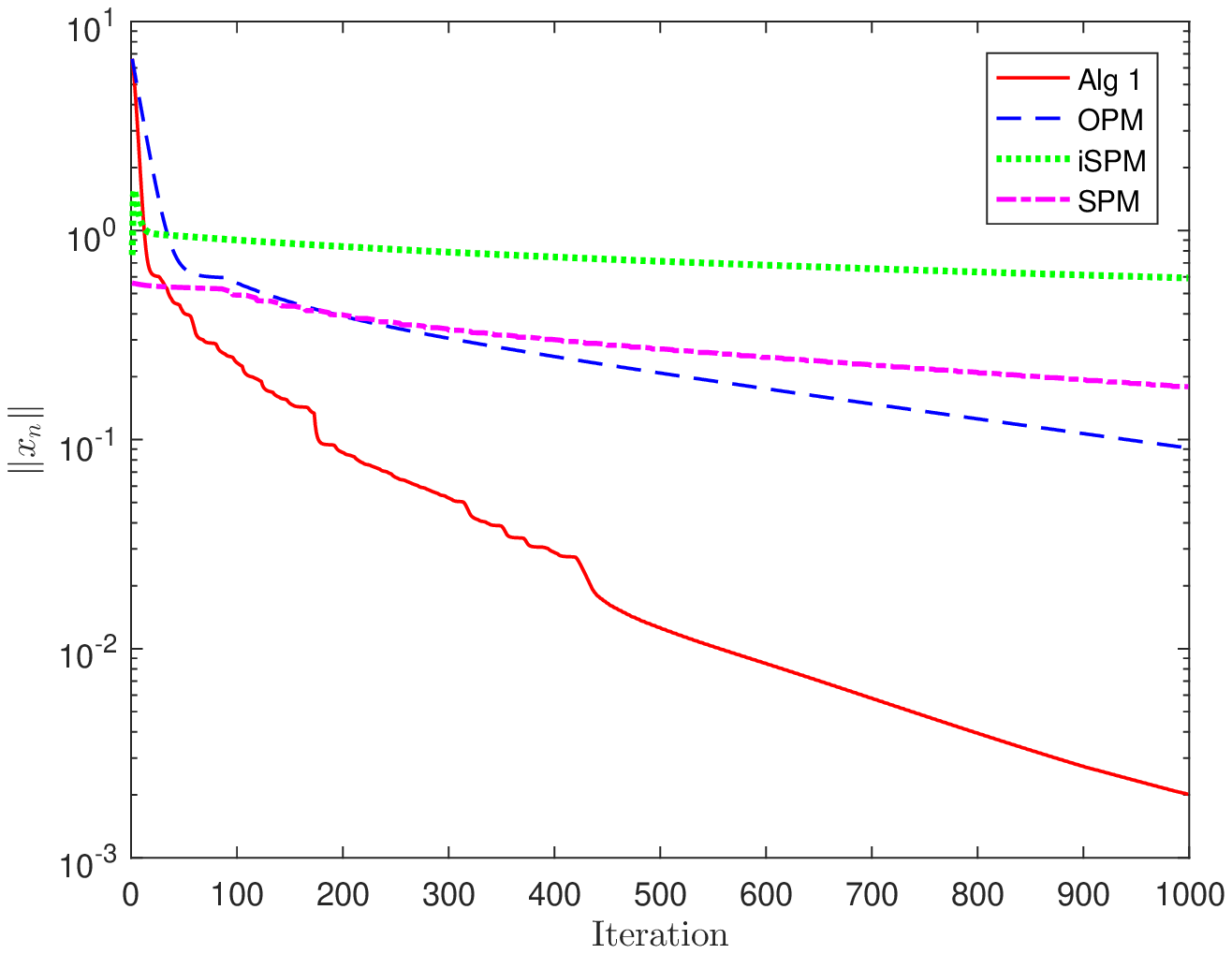}
\caption{Comparison of Algorithm 1, original Algorithm and methods \eqref{pppp3} and \eqref{mm2}. $k=50, \alpha_n=0.6.$
}
%\end{flushleft}
\end{figure}
\noindent
Tables 1 -4 and Figures 2-5 show that Algorithm 1 improves the original Algorithm with respect to ``Iter.", ``InIt." and CPU time. It is also observed from Tables 1 -4 and Figures 2-5 that our proposed Algorithm 1 outperform the subgradient extragradient method \eqref{pppp3} and the inertial subgradient extragradient method \eqref{mm2} with respect to the CPU time and the number of iterations when the feasible set $C$ is nonempty closed affine subset of $H$.

\section{Final Remarks}\label{Sec:Final}

This paper presents a weak convergence result with inertial projection-type method
for monotone variational inequality problems in real Hilbert spaces under very mild assumptions.
This class of method is of inertial nature because at each iteration the projection-type is applied to a point extrapolated at the current iterate in the direction of last movement. Our proposed algorithm framework is not only more simple and intuitive, but also more general than some already proposed inertial projection type methods for solving variational inequality. Based on some pioneering analysis and Algorithm~\ref{Alg:AlgL}, we established certain nonasymptotic $O (1/n) $ convergence rate results. Our preliminary implementation of the algorithms and experimental results have shown that inertial algorithms are generally faster than the corresponding original un-accelerated ones.
In our experiments, the extrapolation step-length $\alpha_n$ was set to be constant. How to select $\alpha_n$ adaptively such that the overall performance is stable and more efficient deserves further investigation. Interesting
topics for future research may include relaxing the conditions on $\{\alpha_n\}$, improving the
convergence results, and proposing modified inertial-type algorithms so that the extrapolation
step-size can be significantly enlarged.\\

\noindent\textbf{Acknowledgements} The project of the first author has received funding from the European Research Council (ERC) under the European Union’s Seventh Framework Program (FP7 - 2007-2013) (Grant agreement No. 616160)

\section{Appendix}
\noindent
In this case, we present a version of Lemma 4.3 for the case when $F$ is pseudo-monotone.

\begin{lem}\label{Lem:help}
Let $F$ be pseudo-monotone, uniformly continuous and sequentially weakly continuous on $H$. Assume that Assumption~\ref{Ass:Parameters} holds.
Furthermore let $\{x_{n_k}\}$ be a subsequence of $ \{x_n\}$
converging weakly to a limit point $ p $. Then $ p \in \text{SOL} $.
\end{lem}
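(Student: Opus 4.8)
The plan is to imitate the proof of the monotone version of this lemma established in Section~\ref{Sec:Convergence}, but to replace the appeal to the Minty-type equivalence of Lemma~\ref{lm27a} --- which needs monotonicity --- by a direct argument using pseudo-monotonicity together with the sequential weak continuity of $F$. First I would collect the facts that do not depend on the monotonicity type of $F$: by Lemma~\ref{nece1} the sequence $\{x_n\}$ is bounded and $\|w_n-x_n\|\to 0$, by Lemma~\ref{nece3} we have $\|w_n-z_n\|\to 0$, and by Lemma~\ref{lm28a} the sets $\{F(w_n)\}$ and $\{F(y_n)\}$ are bounded. Consequently $w_{n_k}\rightharpoonup p$ and $z_{n_k}\rightharpoonup p$; since $z_{n_k}\in C$ and $C$ is closed and convex, $p\in C$. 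Using the projection characterisation \eqref{a22} for $z_{n_k}=P_C(w_{n_k}-F(w_{n_k}))$, writing $x-z_{n_k}=(x-w_{n_k})+(w_{n_k}-z_{n_k})$ and letting $k\to\infty$ (the cross terms vanish because $\|w_{n_k}-z_{n_k}\|\to 0$ and $\{F(w_{n_k})\}$ is bounded), I obtain, exactly as in the monotone case,
\[
   \liminf_{k\to\infty}\,\langle F(w_{n_k}),\,x-w_{n_k}\rangle\ \geq\ 0\qquad\text{for every }x\in C .
\]

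The crux is to pass from this $\liminf$ inequality to a statement about $F$ at $p$. If $F(p)=0$ then $p\in\text{SOL}$ trivially, so assume $F(p)\neq 0$; by sequential weak continuity $F(w_{n_k})\rightharpoonup F(p)$, hence $\liminf_k\|F(w_{n_k})\|\geq\|F(p)\|=:c>0$ by weak lower semicontinuity of the norm, so $\|F(w_{n_k})\|\geq c/2$ for all large $k$. Fix $y\in C$, pick $\epsilon_k\downarrow 0$, and for each $k$ choose (using the $\liminf$ inequality) an index $j_k\to\infty$ with $\langle F(w_{n_{j_k}}),y-w_{n_{j_k}}\rangle+\epsilon_k\geq 0$. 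Put $v_k:=F(w_{n_{j_k}})/\|F(w_{n_{j_k}})\|^2$ and $y_k:=y+\epsilon_k v_k$; then $\|\epsilon_k v_k\|\leq 2\epsilon_k/c\to 0$, so $y_k\to y$ in norm, and $\langle F(w_{n_{j_k}}),y_k-w_{n_{j_k}}\rangle=\langle F(w_{n_{j_k}}),y-w_{n_{j_k}}\rangle+\epsilon_k\geq 0$. Pseudo-monotonicity of $F$, applied to the pair $w_{n_{j_k}},y_k\in H$, gives $\langle F(y_k),y_k-w_{n_{j_k}}\rangle\geq 0$, that is $\langle F(y_k),y-w_{n_{j_k}}\rangle+\epsilon_k\langle F(y_k),v_k\rangle\geq 0$.

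Letting $k\to\infty$ here, $F(y_k)\to F(y)$ by continuity of $F$ and $\{F(y_k)\}$ is bounded by Lemma~\ref{lm28a}, while $y-w_{n_{j_k}}\rightharpoonup y-p$; hence the first term tends to $\langle F(y),y-p\rangle$ and the second is dominated by $2\epsilon_k\|F(y_k)\|/c\to 0$, so $\langle F(y),y-p\rangle\geq 0$ for every $y\in C$. A standard hemicontinuity (Minty) argument then finishes: for $z\in C$ and $t\in(0,1]$, taking $y=p+t(z-p)\in C$ gives $\langle F(p+t(z-p)),z-p\rangle\geq 0$, and $t\to 0^+$ with $F$ continuous yields $\langle F(p),z-p\rangle\geq 0$ for all $z\in C$, i.e.\ $p\in\text{SOL}$. (Alternatively one may quote the ``$\Leftarrow$'' implication of Lemma~\ref{lm27a}, whose proof uses only continuity and convexity of $C$.) The main obstacle is precisely this transfer step: having only a $\liminf$ rather than a genuine limit forces the perturbation $y\mapsto y_k$ along $F(w_{n_{j_k}})$, and it is exactly to keep that perturbation well defined and small --- i.e.\ to keep $\|F(w_{n_k})\|$ bounded away from $0$ --- that the extra hypothesis of sequential weak continuity is needed, a feature absent from the monotone proof where Lemma~\ref{lm27a} did this work for free.
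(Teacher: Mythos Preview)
Your proof is correct and follows essentially the same approach as the paper's: derive the $\liminf$ inequality from the projection characterisation, perturb the test point along $F(w_{n_{j_k}})/\|F(w_{n_{j_k}})\|^{2}$ to upgrade the approximate inequality to an exact one, apply pseudo-monotonicity, and pass to the limit using sequential weak continuity to keep $\|F(w_{n_k})\|$ bounded away from zero. The only cosmetic differences are that the paper selects a threshold index $N_k$ (valid for all $j\ge N_k$) whereas you pick a single index $j_k$, and that the paper concludes by citing an external Minty-type lemma (Lemma~2.2 of \cite{Mashreghi}) while you spell out the hemicontinuity argument directly.
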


\begin{proof}
By the definition of $z_{n_k}$ together with \eqref{a22}, we have
$$\langle w_{n_k}-F(w_{n_k})-z_{n_k},x-z_{n_k}\rangle\leq 0,\ \forall x\in C,$$
\noindent which implies that
$$\langle w_{n_k}-z_{n_k},x-z_{n_k}\rangle\leq\langle F(w_{n_k}),x-z_{n_k}
  \rangle,\ \forall x \in C.$$
Hence,
\begin{equation}\label{j11}
   \langle w_{n_k}-z_{n_k},x-z_{n_k}\rangle +\langle F(w_{n_k}),z_{n_k}-w_{n_k}\rangle
   \leq \langle F(w_{n_k}),x-w_{n_k}\rangle,\ \forall x\in C.
\end{equation}
Fix $x \in C$ and let $k\rightarrow \infty$ in \eqref{j11}.
Since
$\lim_{k \to \infty} \|w_{n_k}-z_{n_k}\|=0 $, we have
\begin{equation}\label{anbi}
    0\leq \liminf_{k \to \infty} \langle F(w_{n_k}),x-w_{n_k}\rangle
\end{equation}
for all $ x \in C $.
Now we choose a sequence $\{\epsilon_k\}_k$ of positive numbers decreasing and tending to $0$.
For each
$\epsilon_k$, we denote by $N_k$ the smallest positive integer such that
\begin{equation}\label{ine2}
\left\langle  F(w_{n_j}), x- w_{n_j}  \right \rangle + \epsilon_k  \geq 0 \quad \forall j \geq N_k,
\end{equation}
where the existence of $N_k$ follows from \eqref{anbi}.
Since  $\left\lbrace \epsilon_k \right\rbrace $ is decreasing, it is easy to see that
the sequence $ \left\lbrace N_k \right\rbrace  $ is increasing.
Furthermore, for each $k$, $F(w_{N_k}) \not= 0$ and, setting
$$
v_{N_k}=\frac{F(w_{N_k})}{\|F(w_{N_k}\|^2},
$$
we have $ \left\langle  F(w_{N_k}),v_{N_k}  \right \rangle=1$ for each $k$.
Now we can deduce from \eqref{ine2} that for each $k$
$$
\left\langle  F(w_{N_k}),x+\epsilon_k v_{N_k} -w_{N_k}  \right \rangle\geq 0,
$$
and, since $F$ is pseudo-monotone, that
\begin{equation}\label{ine3}
\left\langle  F(x+\epsilon_k v_{N_k}),x+\epsilon_k v_{N_k} -w_{N_k}  \right \rangle\geq 0.
\end{equation}
On the other hand, we have that $\left\lbrace x_{n_k} \right\rbrace $ converges weakly to $p$ when $k \to \infty$.
Since $F$ is sequentially weakly continuous on $C$, $\left\lbrace F(w_{n_k}) \right\rbrace $ converges weakly to $F(p)$.
We can suppose that $F(p) \not = 0$ (otherwise, $p$ is a solution).
Since the norm mapping is sequentially weakly lower semicontinuous, we have
$$
0<\|F(p) \| \leq \lim \inf_{k \to \infty} \|F(w_{n_k})\|.
$$
Since $\left\lbrace w_{N_k} \right\rbrace \subset \left\lbrace w_{n_k} \right\rbrace$ and $\epsilon_k \to 0$ as $k \to \infty$, we obtain
\begin{eqnarray*}
0 &\leq& \lim \sup_{k \to \infty} \|\epsilon_k v_{N_k}\| = \lim \sup_{k \to \infty} \Big(\frac{\epsilon_k}{\|F(w_{n_k})\|}\Big)\\
&\leq& \frac{\lim \sup_{k \to \infty}\epsilon_k}{\lim \inf_{k \to \infty}\|F(w_{n_k})\|}  \leq \frac{0}{\|F(p)\|}=0,
\end{eqnarray*}
which implies that $\lim_{k \to \infty} \|\epsilon_k v_{N_k}\|=0$.
Hence, taking the limit as $k \to \infty$ in \eqref{ine3}, we obtain
$$
\left\langle  F(x),x-p  \right \rangle \geq 0.
$$
\noindent
Now, using Lemma~2.2 of \cite{Mashreghi}, we have that $p\in\text{SOL}$.
\end{proof}

\end{document}